\newcommand{\floor}[1]{\left\lfloor #1 \right\rfloor}
\newcommand{\set}[1]{\left\{ #1 \right\}}
\newcommand{\ol}{\overline}
\newcommand{\ZZ}{\mathbb{Z}}
\newtheorem{thm}{Theorem}[section]
\newtheorem{lem}[thm]{Lemma}
\newtheorem{conj}[thm]{Conjecture}
\theoremstyle{definition}
\newtheorem{defn}[thm]{Definition}
\DeclareMathOperator{\width}{width}
\title{Everywhere unbalanced configurations}
\author{David Conlon\thanks{Department of Mathematics, Caltech, Pasadena, CA 91125, USA. Email: {\tt dconlon@caltech.edu}. Research supported by NSF Awards DMS-2054452 and DMS-2348859.} \and Jeck Lim\thanks{Department of Mathematics, Caltech, Pasadena, CA 91125, USA. Email: {\tt jlim@caltech.edu}. Research partially supported by an NUS Overseas Graduate Scholarship.}}
\date{}
\begin{document}
\maketitle

\begin{abstract}
An old problem in discrete geometry, originating with Kupitz, asks whether there is a fixed natural number $k$ such that every finite set of points in the plane has a line through at least two of its points where the number of points on either side of this line differ by at most $k$. We give a negative answer to a natural variant of this problem, showing that for every natural number $k$ there exists a finite set of points in the plane together with a pseudoline arrangement such that each pseudoline contains at least two  points and there is a pseudoline through any pair of points where the number of points on either side of each pseudoline differ by at least $k$. 
Moreover, we may find such a configuration with 
at most $2^{2^{ck}}$ points, which, by a result of Pinchasi, is best possible up to the value of the constant $c$.
\end{abstract}

\section{Introduction}

Does there exist a positive integer $k$ such that for every finite set of points in the plane there is a line containing at least two of these points where the number of points on either side of this line differ by at most $k$? This basic problem was first raised by Kupitz~\cite{K79} in the late 1970s and has since been reiterated by many authors, including Alon~\cite{Al02}, Erd\H{o}s~\cite{Erd84}, Green~\cite{Green}, Kalai~\cite{Kalai}, Pach~\cite{P18} and Pinchasi~\cite{P03}. It was also singled out for inclusion in the book of Brass, Moser and Pach~\cite{BMP05} that surveys many of the most interesting open problems in discrete geometry.

If a point set has an odd number of points with no three of them on a line, then the number of points on either side of each line through two of the points must differ by at least one. Kupitz's original conjecture~\cite{K79} was that this should be the extremal case, that is, that there should always be a line containing at least two points of any finite point set where the number of points on either side differ by at most one. This was disproved by Alon~\cite{Al02}, who showed that there are finite point sets where the number of points on either side of each line determined by the set differ by at least two. No better example is known, though in Figure~\ref{fig:odd} we give a different example to Alon's with the same property, but, unlike his examples, containing an odd number of points. This example can easily be extended to any larger odd number of points by adding an equal number of points on both ends of some line.

\begin{figure}
    \centering
    \includegraphics[scale=0.7]{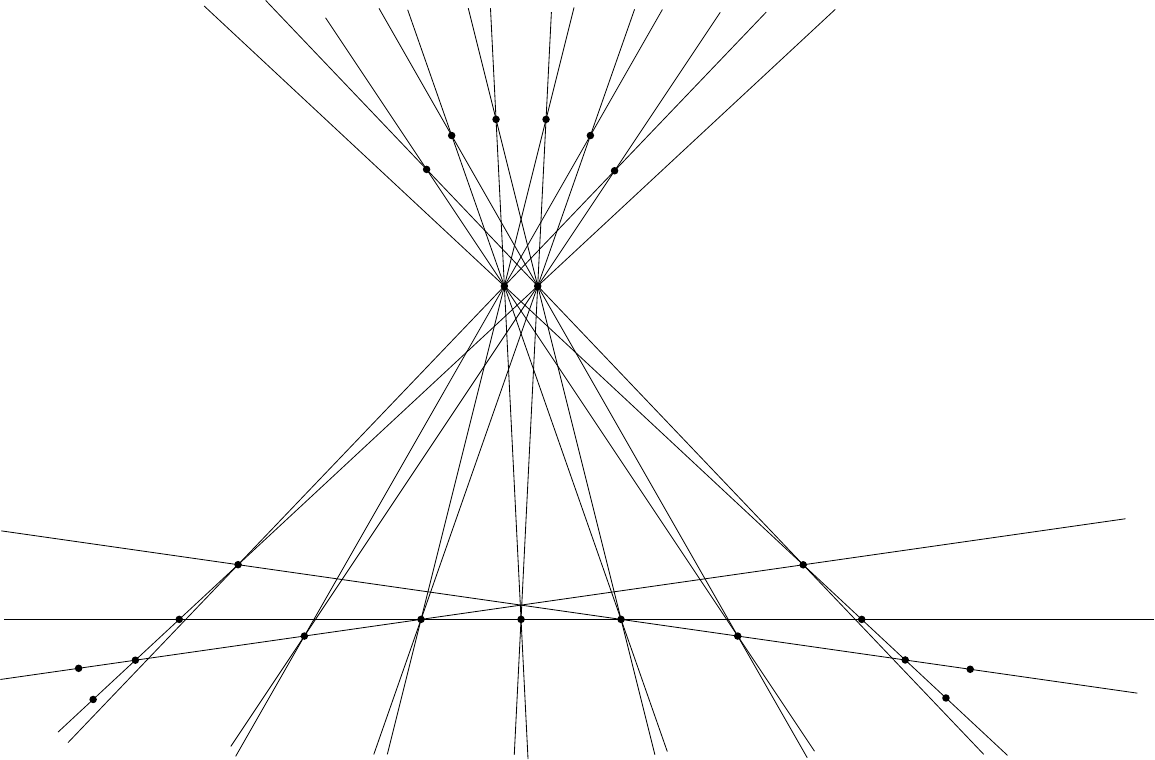}
    \caption{A set of 23 points where the number of points on either side of each line determined by the set differ by at least two.}
    \label{fig:odd}
\end{figure}

In the opposite direction, improving earlier unpublished results of Alon and Perles, it was shown by Pinchasi~\cite{P03} that there exists an absolute constant $C$ such that every $n$-point set determines a line where the number of points on either side of the line differ by at most $C \log \log n$. In fact, his result applies in the much broader context of generalised configurations. Recall that a {\it pseudoline arrangement} is a collection of two-way unbounded simple curves any two of which meet in at most one point. A {\it generalised configuration} is then a finite set of points in the plane together with a pseudoline arrangement such that each pseudoline contains at least two points and there is a pseudoline through any pair of points. With this terminology, we may now state Pinchasi's result, of which the result stated above is clearly a special case.

\begin{thm}[Pinchasi] \label{thm:Pin}
There exists an absolute constant $C$ such that every generalised configuration with $n$ points contains a pseudoline where the number of points on either side of the pseudoline differ by at most $C \log \log n$.
\end{thm}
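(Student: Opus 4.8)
The plan is to argue by contradiction. Call the \emph{balance} of a pseudoline the difference between the numbers of points on its two sides, and suppose every pseudoline of our generalised configuration has balance at least $b$; the goal is to show this forces $b = O(\log\log n)$, which is the theorem. (We may assume the points are not all collinear, since otherwise the pseudoline through any pair has balance $0$.) The basic tool is a ``rotating pseudoline'' lemma. Fix a point $p$ and let $\ell_1,\dots,\ell_r$ be the pseudolines through $p$, listed in the cyclic order in which a directed pseudoline through $p$ sweeps past them as it rotates by $\pi$; write $s_j$ for the number of points other than $p$ lying on $\ell_j$ and put $\alpha_j = s_1 + \dots + s_{j-1}$, so $\alpha_{r+1} = n-1$. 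As the rotating pseudoline sweeps past $\ell_j$, the $s_j$ points of $\ell_j$ all cross from one side to the other at once, so the two sides of $\ell_j$ carry $\alpha_j$ and $(n-1)-s_j-\alpha_j$ of the remaining points, giving
\[
\operatorname{bal}(\ell_j) \;=\; \bigl|\, 2\alpha_j + s_j - (n-1)\,\bigr| \;=\; \bigl|\, \alpha_j + \alpha_{j+1} - (n-1)\,\bigr|.
\]
Since $j \mapsto \alpha_j + \alpha_{j+1}$ is strictly increasing, runs from $s_1 \le n-1$ to $2(n-1)-s_r \ge n-1$, and has consecutive gaps $s_j + s_{j+1}$, some $\ell_j$ has balance at most $\tfrac12\max_j(s_j + s_{j+1})$. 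Under our hypothesis this forces $\max_j\bigl(s_j^{\,p} + s_{j+1}^{\,p}\bigr) \ge 2b$ for \emph{every} point $p$; in particular every point lies on a pseudoline with at least $b+1$ points, and that heavy pseudoline straddles the angular midpoint around $p$, so it roughly halves the remaining points.

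The heart of the argument should be a recursive step: from a generalised configuration on $N$ points all of whose pseudolines have balance at least $b$, produce a smaller generalised configuration --- on at least $\sqrt{N}$ points --- all of whose pseudolines have balance at least $b-c_0$, for an absolute constant $c_0$. Granting this, iterate about $\log_2\log_2 n$ times until only a bounded number of points remain; the final configuration still has all balances at least $b - c_0\log_2\log_2 n - O(1)$, but an $m$-point configuration has all balances at most $m-2$, so $b \le c_0\log_2\log_2 n + O(1)$, which is the theorem. The smaller configuration should be obtained by a \emph{contraction}: using the heavy pseudolines (which, by the first step, cover the entire point set) together with the midpoint information, one partitions most of the points into clusters and collapses each cluster to a single point --- the reverse of the doubly exponential ``blow-up'' behaviour that the extremal configurations are expected to exhibit.

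The main obstacle is to perform this contraction while losing only an additive constant in the balance. An additive loss is essential: a multiplicative loss yields only $b = O(\sqrt{\log n})$, and one cannot simply delete points, because a heavy pseudoline lying almost entirely on one side of another pseudoline exactly cancels the latter's imbalance, so naive deletion can destroy balance altogether. I would split into two cases. If some pseudoline $\ell^{*}$ carries at least $\sqrt{N}$ points, contract (or delete) it and recover the lost balance by a compensating argument --- for instance, simultaneously passing to the heavier side of $\ell^{*}$ and running a one-sided version of the rotating lemma there, or pairing $\ell^{*}$ with a second heavy pseudoline on the opposite side. If every pseudoline carries fewer than $\sqrt{N}$ points, then the heavy ($\ge b+1$-point) pseudolines cover all $N$ points while $\sum_{\ell}\binom{t_\ell}{2} = \binom{N}{2}$, and a careful incidence count --- also exploiting that the heavy pseudolines through a common point cluster cyclically around its midpoint --- should already bound $N$ in terms of $b$. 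Squeezing the clean ``$\sqrt{N}$ versus $b-c_0$'' trade-off out of either branch is the delicate part.
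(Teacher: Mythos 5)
This is Pinchasi's theorem, which the paper cites from~\cite{P03} but does not prove; the paper's own contribution is the matching lower bound (Theorems~\ref{thm:main1} and~\ref{thm:main2}), so your proposal has to be assessed on its own rather than against an in-paper argument. Your opening local lemma is correct and cleanly derived: with $\ell_1, \dots, \ell_r$ the pseudolines through $p$ in rotational order, $s_j$ the number of further points on $\ell_j$, and $\alpha_j = s_1 + \cdots + s_{j-1}$, the balance of $\ell_j$ is $\lvert \alpha_j + \alpha_{j+1} - (n-1)\rvert$; since $j \mapsto \alpha_j + \alpha_{j+1}$ increases with gaps $s_j + s_{j+1}$ from $s_1 \le n-1$ to $2(n-1)-s_r \ge n-1$, some $\ell_j$ has balance at most $\tfrac12 \max_j(s_j+s_{j+1})$. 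Hence if every balance is at least $b$, each point lies on a pseudoline with at least $b+1$ points near its angular midpoint. All of this is sound.

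Everything after that, though, is a plan rather than a proof, and you flag this yourself. The entire weight of the argument rests on the recursive step --- from $N$ points all of balance at least $b$ to roughly $\sqrt{N}$ points all of balance at least $b - c_0$ --- and that step is not carried out. The obstacle you name (the loss per step must be additive, and deleting a heavy pseudoline can exactly cancel the imbalance of a pseudoline it straddles) is genuine, and neither of your two cases resolves it: in the first, the ``compensating argument'' is left as a hope; in the second, the identity $\sum_\ell \binom{t_\ell}{2} = \binom{N}{2}$ together with ``heavy ($\ge b+1$-point) pseudolines cover the point set'' does not on its own bound $N$ in terms of $b$ --- both constraints are satisfied for arbitrarily large $N$ (e.g.\ many near-disjoint heavy pseudolines). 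The additional structure you invoke, that heavy pseudolines through a common point cluster around its angular midpoint, is exactly the information that must be used quantitatively to close the gap, and you do not say how. That is the heart of Pinchasi's theorem. As the introduction notes, his proof is carried out entirely in the language of allowable sequences of permutations (the same framework the present paper uses for the converse direction), not via geometric contraction; recasting your recursive step as an operation on allowable sequences is likely the right next move, but as written the proposal is an incomplete sketch, not a proof.
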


Our main result says that Pinchasi's result is tight up to the constant.

\begin{thm} \label{thm:main1}
There exists a positive constant $c$ such that for every sufficiently large natural number $n$ there is a generalised configuration with $n$ points where the number of points on either side of each pseudoline differ by at least $c \log \log n$. 
\end{thm}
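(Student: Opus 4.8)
The plan is to recast the problem as a counting bound, establish that bound by a recursive construction, and iterate. Call a generalised configuration \emph{$k$-unbalanced} if the numbers of points on the two sides of every pseudoline of its arrangement differ by at least $k$, and let $f(k)$ denote the least number of points in a $k$-unbalanced configuration (with $f(k)=\infty$ if none exists). Theorem~\ref{thm:Pin} gives the lower bound $f(k)\ge 2^{2^{\Omega(k)}}$, and Theorem~\ref{thm:main1} follows from the matching upper bound $f(k)\le 2^{2^{O(k)}}$ — equivalently, from a recursive estimate $f(k+1)\le f(k)^{O(1)}$. Indeed, given large $n$, taking the largest $k$ with $f(k)\le n$ produces a $k$-unbalanced configuration on at most $n$ points with $k\ge c\log\log n$, and a padding argument (repeatedly adjoining pairs of points far out along the two ends of a fixed pseudoline, so that, exactly as in the remark after Figure~\ref{fig:odd}, every existing pseudoline acquires the same number of new points on each side, while the pseudolines through the new points can be made highly unbalanced) raises the number of points to exactly $n$ without reducing the imbalance by more than a constant factor.

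First I would realise the recursive step as a blow-up: given a $k$-unbalanced configuration $\mathcal{C}$ on $m$ points, replace each point $p$ by a minuscule scaled copy $\mathcal{C}_p$ of an auxiliary unbalanced configuration, and extend the pseudoline arrangement by extending each pseudoline internal to a cluster $\mathcal{C}_p$ to a full pseudoline of the new arrangement and, for points in distinct clusters $\mathcal{C}_p$ and $\mathcal{C}_q$, by running a pseudoline that shadows the $\mathcal{C}$-pseudoline through $p$ and $q$; the result has $m^{O(1)}$ points. The \emph{long} pseudolines, joining two distinct clusters, cause no trouble: every other cluster lies wholly on one side, so the imbalance of such a pseudoline is about $m$ times the imbalance of a pseudoline of $\mathcal{C}$, which is enormous. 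The delicate objects are the \emph{short} pseudolines, those contained in a single cluster $\mathcal{C}_p$: their imbalance inside the cluster is only what was inherited from the auxiliary configuration, and a priori the remaining clusters may split almost evenly and cancel it down to a constant for a short pseudoline near the ``centre'' of the configuration. It is cleanest to keep track of everything through the allowable sequence (wiring diagram) of the arrangement, in which the imbalance of a pseudoline is the distance of the corresponding block reversal from the midpoint of the permutation and the blow-up becomes a substitution of one allowable sequence into another.

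Controlling these short pseudolines is the expected main obstacle — it is precisely where both the failure of the straight-line version and the $\log\log n$ threshold reside. The plan is to use the extra freedom of pseudolines (absent for straight lines): when prolonging a short pseudoline out of its cluster one may choose, subject only to meeting every other pseudoline at most once, on which side each distant cluster lies, and I would make these choices — together with the placement and orientation of the clusters and the choice of the coarse configuration $\mathcal{C}$ — so that the distant clusters \emph{reinforce} the internal imbalance instead of cancelling it. A useful extra device is to keep the arrangement simple and the number of points odd, so that every imbalance is odd; then a configuration that is $k$-unbalanced with $k$ even is automatically $(k+1)$-unbalanced, which upgrades a blow-up that merely preserves the imbalance to one that increases it. Performing the substitution at the level of adjacent transpositions keeps the arrangement simple, so this trick persists through the recursion.

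Finally, starting from a small unbalanced configuration — for instance the simple, odd, $2$-unbalanced one in Figure~\ref{fig:odd} — and applying the blow-up $t$ times yields, for each $t$, a configuration on $N_t=N_0^{2^t}$ points that is $\Omega(t)$-unbalanced, hence $\Omega(\log\log N_t)$-unbalanced; the padding step then supplies configurations of every sufficiently large size $n$, completing the proof.
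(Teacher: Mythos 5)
Your proposal has the right skeleton — recast the statement as a size bound $f(k)\le 2^{2^{O(k)}}$, achieve it via a recursive step $f(k+1)\le f(k)^{O(1)}$, and work in the allowable-sequence (wiring diagram) model, exactly as the paper does. You also correctly identify the one real obstacle: pseudolines that live inside a single cluster, near the centre of the configuration. But that is precisely where the proposal stops being a proof, and the gap is substantial.

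First, the claim that ``when prolonging a short pseudoline out of its cluster one may choose \ldots on which side each distant cluster lies'' is not free. In the allowable-sequence picture the imbalance of a pseudoline is fixed by \emph{when} (at which position in the permutation) the corresponding flip occurs, and the flips are globally coupled: every pair of elements must be transposed exactly once, permutations must evolve one step at a time, and (if you want a simple arrangement) any two pseudolines cross exactly once. The central cluster must both start and end near the middle of the permutation, yet all of its internal transpositions must occur far from the middle; this forces the cluster to be marched away from the centre, have its internal flips executed there, and be marched back — and every step of that march is itself a flip that must also avoid the centre. Whether this is even possible, and at what cost in extra elements, is exactly the content of the paper's Lemmas~\ref{lem:shift} and~\ref{lem:reflect} and the recursive Lemma~\ref{lem:main}: ``shifting'' a width-$(2t+1)$ window across the centre without any flip landing near it already costs $3^{2t}$ auxiliary elements, and the notion of $r$-balancedness in Lemma~\ref{lem:decomp}--\ref{lem:main} is what keeps the bookkeeping consistent through the recursion. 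Your sketch replaces all of this with the assertion that suitable choices exist.

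Second, the parity device does not do what you need. Keeping a simple arrangement on an odd number of points indeed forces all imbalances to be odd, so a configuration that is $k$-unbalanced with $k$ even is automatically $(k+1)$-unbalanced. But this upgrades the imbalance \emph{once}: after that step the threshold is odd, and the next blow-up, which by your own description only preserves the inherited imbalance of short pseudolines, leaves you with an odd threshold to which parity adds nothing. So the iteration as proposed is stuck at one increment, not an unbounded gain. Replacing it by a genuine mechanism that strictly increases the imbalance at each stage is, again, the hard part: in the paper this is achieved not by a parity trick but by the multiplicative growth of the balancedness parameter $\beta_k/\alpha_k$ engineered in Lemma~\ref{lem:main}. (The ancillary padding step, by contrast, is fine in spirit and corresponds to the paper's remark that one can pass from odd $n$ to all large $n$ by removing a point.)
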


In particular, for any positive integer $k$, there is a generalised configuration where the number of points on either side of each pseudoline differ by at least $k$. 

The reason why Pinchasi's result applies to pseudolines as well as lines is that he works throughout with {\it allowable sequences of permutations}. This is a sequence of permutations of $[n] := \{1, 2, \dots, n\}$ starting with the identity permutation $1, 2, \dots, n$ and ending with its reverse $n, n-1, \dots, 1$ where every permutation in the sequence arises from its predecessor by flipping the elements in one or more non-overlapping increasing subsequences of consecutive elements. For example, the sequence
\[12345 \rightarrow 21354 \rightarrow 25314 \rightarrow 52341 \rightarrow 54321\]
is an allowable sequence of permutations of $[5]$.

Introduced by Goodman and Pollack~\cite{GP80}, the importance of allowable sequences is that every generalised configuration of points gives rise to such a sequence and, conversely~\cite{GP84}, that every allowable sequence can be realised as a generalised configuration. Some intuition can be gained by thinking about the generalised configuration consisting of a point set and the line arrangement determined by that point set. If we fix another line $\ell$ such that the orthogonal projection of the points of our set onto $\ell$ are all distinct, we can label the points as $1, 2, \dots, n$ in that order. If we now rotate $\ell$ counterclockwise about a fixed point, then the order of the orthogonal projections of our points onto $\ell$ shifts as we rotate, jumping to a different permutation every time $\ell$ moves through the perpendicular to any line in our collection, ultimately arriving at $n, n-1, \dots, 1$ after rotating through $180^{\circ}$. The construction of an allowable sequence when we instead have a pseudoline arrangement is similar, though we refer the reader to~\cite{FG18} for more details.

In this language, Pinchasi's result may be stated as saying that there is an absolute constant $C$ such that every allowable sequence of permutations of $[n]$ uses a flip $[a,b]$ where $|\frac{a+b}{2} - \frac{n+1}{2}| \leq C \log \log n$, where here the {\it flip} $[a,b]$ is understood as taking a permutation $\pi_1, \pi_2, \dots, \pi_n$ of $[n]$ and reversing the elements in the block $\pi_a, \pi_{a+1}, \dots, \pi_b$. Similarly, the result that we will actually prove can be stated as follows. 

\begin{thm} \label{thm:main2}
There exists a positive constant $c$ such that for every sufficiently large natural number $n$ there is an allowable sequence of permutations of $[n]$ where each flip $[a,b]$ has $|\frac{a+b}{2} - \frac{n+1}{2}| \geq c \log \log n$.
\end{thm}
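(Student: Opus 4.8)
\emph{Proof idea.} I would prove this by a recursive construction in which each level raises an ``imbalance parameter'' by a constant while increasing the number of elements only polynomially; iterating $\Theta(k)$ times from a trivial base case then yields an allowable sequence on $n=2^{2^{O(k)}}$ elements every flip $[a,b]$ of which has $|\tfrac{a+b}{2}-\tfrac{n+1}{2}|\ge k$, and since $k\ge c\log\log n$ for an absolute constant $c>0$ this is exactly the statement (a routine padding argument, adding blocks of elements far out on either end, then extends the conclusion from this sparse set of values of $n$ to all sufficiently large $n$). It is convenient to prove a two-parameter strengthening: for $p\in\{1,\dots,N\}$ and a radius $r$, call an allowable sequence on $[N]$ \emph{$(r,p)$-good} if every flip $[a,b]$ it uses satisfies $|\tfrac{a+b}{2}-p|\ge r$; one ultimately wants $(r,\tfrac{N+1}{2})$-good sequences, but the off-centre version is forced on us because reversing a proper consecutive sub-block of a larger configuration imposes exactly such an off-centre constraint on that sub-block.

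The recursive step is a \emph{substitution}: take an $(r-1,\tfrac{m+1}{2})$-good allowable sequence $\mathcal S$ on $m$ super-elements, expand each super-element $i$ into a consecutive block $B_i$, let the blocks evolve according to $\mathcal S$, and reverse the interiors of the blocks in parallel using inductively obtained good sequences. Substitution of allowable sequences is a standard operation and produces a genuine allowable sequence, so the work is entirely in arranging where the resulting flips sit. The key design choice is the block sizes: one block necessarily straddles the target position, so one lets $|B_i|$ grow (roughly quadratically) with the distance of $i$ from the centre of $\mathcal S$'s range, with an $O(1)$-size block at the very centre. With this choice the interior flips of any block $B_i$ that is not very central automatically have midpoints far (distance $\gg r$) from the target position; a move of $\mathcal S$ that straddles its own centre --- necessarily quite lopsided, because $\mathcal S$ is $(r-1)$-good --- lifts to a block-reversal whose span, thanks to the quadratic growth, reaches far enough out on its lighter side that the lift is again a legal flip; and the $\mathcal S$-moves that do not straddle the centre lift, via ``block-bubble'' moves that introduce no spurious internal reversals, to flips comfortably on one side.

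The genuine obstacle, which I expect to be the technical heart, is the innermost region: the $O(\sqrt r)$ central blocks on each side have total width comparable to the forbidden window of radius $r$ itself, so neither their interiors nor their order can be rearranged by any flip lying wholly within that region --- every such flip is forbidden. These must be manipulated by lopsided flips reaching in from the larger blocks further out (which, by the quadratic growth, lie far enough away that such flips are legal); scheduling these reach-ins so that they, together with all the other moves, constitute a valid allowable sequence --- each reversed block being increasing at the instant it is flipped --- while every flip still meets its midpoint bound, is where the care lies. Granting that this can be done with $N=\mathrm{poly}(m)$, the recursion yields $N_r\le 2^{2^{O(r)}}$, completing the induction and hence the theorem.
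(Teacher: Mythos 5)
Your high-level strategy --- a recursion of depth $\Theta(\log\log n)$ in which each level gains a constant on the imbalance parameter at polynomial cost in the number of points --- is the right shape, and the asymptotic bookkeeping works out. But the mechanism you sketch (substitute each point of an $(r-1)$-good allowable sequence by a block whose size grows with its distance from the centre, then lift) is genuinely different from the paper's. The paper fixes the forbidden radius $t$ once and for all, works with centred sequences and a fixed window $[-t,t]$, and develops two primitive moves --- Shifting (Lemma~\ref{lem:shift}), which brings a prescribed block of size $2t+1$ into the window, and Reflection (Lemma~\ref{lem:reflect}), which ferries a block across $0$. Its recursion (Lemma~\ref{lem:main}) does not increase the radius; rather it iterates, $d$ times per level, a construction that accumulates a growing supply of negative elements into a block $B_k$ while keeping it $(\beta_k/\alpha_k)$-balanced, a quantitative width condition. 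That balance is the resource the final construction spends, via the decomposition lemma (Lemma~\ref{lem:decomp}), to carry out exactly the reach-in moves you describe.

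The step you defer --- ``scheduling the reach-ins'' for the central blocks --- is not a technicality: it is essentially the entire content of the theorem, and your proposal does not supply the apparatus to do it. Two concrete difficulties. First, the forbidden window is fixed in position but the blocks move through it as the sequence evolves; it is not only the initially-central blocks that need reach-in help, but every block that transits the window, and at the instant of each reach-in flip the whole stretch it spans must be increasing, which constrains the state of every intermediate block simultaneously. Second, the $\mathcal{S}$-level flips themselves reverse the internal order of every block they straddle, so reversing block interiors ``in parallel'' cannot be decoupled from the super-level schedule; the parity of $\mathcal{S}$-flips seen by a block matters, and it changes with the block sizes you chose. Managing these two interacting bookkeeping problems is precisely what the paper's Shifting, Reflection, and $r$-balancedness machinery is for: it is an inventory of legal reach-in moves together with an invariant guaranteeing the supply of helpers on the far side of the window never runs out. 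Without developing something of comparable strength, ``granting that this can be done with $N=\mathrm{poly}(m)$'' is granting the theorem. I would suggest trying to execute even a single clean level $r-1 \to r$ of your substitution; I expect the attempt would force you to reinvent something very like the paper's primitive lemmas.
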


As already mentioned, a result of Goodman and Pollack~\cite{GP84} says that any allowable sequence of permutations of $[n]$ can be realised as a generalised configuration with $n$ points. Moreover, when we flip $[a,b]$, the number of elements on either side of the corresponding pseudoline must differ by at least $|n - b - a + 1|$, so Theorem~\ref{thm:main1} follows as an immediate corollary of Theorem~\ref{thm:main2}. As such, the remainder of the paper will be concerned with proving this latter theorem. We will prove the theorem for sufficiently large odd $n$, but it trivially extends to all large $n$ by removing a single point when necessary.

\section{Centred sequences, flips and blocks}

We begin our proof of Theorem~\ref{thm:main2} by introducing some notation and justifying some standard procedures that will be used repeatedly in the construction. We first fix a positive integer $t$ that will appear throughout the argument.

\begin{defn}
A \textit{centred sequence} $\mathbf{A}$ is an injective map $\mathbf{A}:[a,b]\to\ZZ$ for some integers $a\leq b$. For $a\leq n\leq b$, we denote $\mathbf{A}(n)$ by $\mathbf{A}_n$. The \textit{underlying sequence} of $\mathbf{A}$ is then $\mathbf{A}_{a},\mathbf{A}_{a+1},\ldots,\mathbf{A}_b$.

Given such a centred sequence $\mathbf{A}$, a \textit{flip} $F$ is an interval $[c,d]$, where $a\leq c\leq d\leq b$. The \textit{size} of $F$ is $d-c+1$.
Performing the flip $F$ on the sequence $\mathbf{A}$ gives a new centred sequence $\mathbf{A}':[a, b]\to \ZZ$ where the subsequence $[c,d]$ of $\mathbf{A}$ is reversed.

The flip $F$ is \textit{valid} if the centred subsequence $\mathbf{A}|_{[c,d]}$ is increasing and $\frac{c+d}{2}$ does not lie in the real interval $[-t,t]$. 
\end{defn}

Our aim will be to find some $n>t$ and, starting from the identity centred sequence $\mathbf{A}:[-n,n]\to\ZZ$, to perform a sequence of valid flips, ending at the reverse of $\mathbf{A}$.

\begin{defn} \label{def:block}
A \textit{block} $B$ is a sequence $B_1,\ldots,B_n$ of distinct integers. The \textit{size} of $B$, which we denote by $|B|$, is $n$.

We can again define a \textit{flip} $[c,d]$ of $B$, where performing this flip will reverse the subsequence $B_c,B_{c+1},\ldots,B_d$. We say that a flip is \textit{valid} if the subsequence $B_c,\ldots,B_d$ is initially increasing.

Denote by $\ol{B}$ the block which is the reverse of $B$. If $|B|=m$, write $\mathbf{B}$ for the centred sequence $\mathbf{B}:[t-m+1,t]\to\ZZ$ with the same underlying sequence as $B$. Conversely, if $\mathbf{A}$ is a centred sequence, we write $A$ for the block corresponding to the underlying sequence of $\mathbf{A}$.
\end{defn}

\begin{defn}
We say that a centred sequence $\mathbf{A}'$ can be \textit{obtained} from a centred sequence $\mathbf{A}$ if there is a sequence of valid flips sending $\mathbf{A}$ to $\mathbf{A}'$. We say that a block $B'$ can be \textit{obtained} from a block $B$ if there is a sequence of valid flips sending $B$ to $B'$. We also say that we can \textit{go from $\mathbf{A}$ to $\mathbf{A}'$} or that we can \textit{go from $B$ to $B'$}.
\end{defn}

Note that a valid flip of a block is equivalent to making several valid flips of size 2 each. Thus, to see if $B'$ can be obtained from $B$, it suffices to consider only valid flips of size 2.

\begin{defn}
Given two blocks $B,C$ of sizes $m,n$, their \textit{concatenation} $BC$ is the block of size $m+n$ with sequence $B_1,\ldots,B_m,C_1,\ldots,C_n$.

Given a centred sequence $\mathbf{A}:[a,b]\to\ZZ$, the \textit{concatenation} $\mathbf{A}B$ is the centred sequence $\mathbf{A}':[a,b+m]\to\ZZ$ with underlying sequence $\mathbf{A}_{a},\ldots,\mathbf{A}_b,B_1,\ldots,B_m$. We can similarly concatenate a block to the left, writing it as $B\mathbf{A}$.
\end{defn}

\begin{defn}
Given blocks or centred sequences $B,C$, we write $B\prec C$ if $\max_i B_i < \min_j C_j$. We also write $B\succ 0$ if $B_i>0$ for all $i$ and similarly for $B\prec 0$. 

We say that a block or centred sequence $B$ is \textit{increasing} (resp., \textit{decreasing}) if its underlying sequence is increasing (resp., decreasing).
\end{defn}

Observe that if $B,C$ are blocks such that $B\prec C$, then we can easily go from $BC$ to $CB$. We will constantly use this simple operation in what follows. We now describe another basic operation that we use repeatedly.

\begin{lem}[Shifting] \label{lem:shift}
For all $n\geq 3^{2t}$, the following holds. Let $\mathbf{A}:[-t,t]\to \ZZ$ be a centred sequence and $B,C$ be blocks such that
\begin{itemize}
    \item $|B|=n$, $|C|=2t+1$,
    \item $B$ is increasing,
    \item $\mathbf{A}\prec B\prec C$.
\end{itemize}
Then one can go from $\mathbf{A}BC$ to $\mathbf{C}D$, where $D$ is decreasing and, in keeping with Definition~\ref{def:block}, $\mathbf{C}:[-t,t]\to\ZZ$ is the centred sequence with the same underlying sequence as the block $C$. 
\end{lem}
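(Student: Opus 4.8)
The plan is to use the centred sequence $\mathbf{A}$, which occupies positions $[-t,t]$, as a kind of "reservoir" that lets us reverse the long increasing block $B$ even though we are not permitted to flip any interval centred inside $[-t,t]$. The key point is that a valid flip of a block decomposes into valid flips of size $2$ (adjacent transpositions of an out-of-order increasing pair), so we only ever need to worry about swapping two adjacent entries $x < y$ where the midpoint of their two positions avoids $[-t,t]$; since the two positions are consecutive integers, this midpoint is a half-integer, so the only forbidden swaps are those straddling a position in $\{-t, \dots, t\}$, i.e.\ swaps where one element sits at position $j$ and the other at position $j+1$ with $-t \le j \le t-1$ roughly. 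The reservoir $\mathbf{A}$ will be small (size $2t+1$) and will let us "park" elements on the left of the forbidden zone and feed them back in increasing order.

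First I would bring $C$ to the front. Since $\mathbf{A} \prec B \prec C$ and $C$ has size $2t+1$, I can slide each element of $C$ leftward past all of $B$ and all of $\mathbf{A}$: every such swap moves an element of $C$ (large) left past a smaller element, which is a valid increasing-pair flip, and I can route these swaps so that none of them is ever centred in $[-t,t]$ — concretely, move the elements of $C$ one at a time, each time performing all the swaps to its left in a single sweep, so the only positions involved are the ones currently occupied, and since $C$ is being assembled at the far left (positions $[-n-(2t+1)+1, -n]$, far outside $[-t,t]$ because $n \ge 3^{2t} > 3t$) every swap in the sweep that would be centred in $[-t,t]$ can be avoided by doing the sweep in the right order — actually the cleanest formulation is: use the standard fact noted just before the lemma that $B \prec C$ lets us freely go from $B \wedge C$ to $C \wedge B$, and likewise $\mathbf{A} \prec (B\wedge C)$ lets us move past $\mathbf{A}$, provided all intermediate flips avoid the forbidden centre. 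After this stage the configuration is $\mathbf{C} \wedge (\text{some arrangement of }\mathbf{A}\text{'s entries and }B)$ with $C$'s entries now occupying the central positions $[-t,t]$.

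Next, with $C$ safely in the middle, the positions to the right of $[-t,t]$ form an interval entirely of the form $[t+1, \dots]$, which is disjoint from $[-t,t]$, so \emph{every} size-$2$ flip among those positions is valid. Hence I can freely permute the block sitting to the right of $\mathbf{C}$ — in particular I can sort the old $\mathbf{A}$-entries and the $B$-entries and then reverse the whole thing, producing a decreasing block $D$. This yields exactly $\mathbf{C} \wedge D$ with $D$ decreasing, which is the claim. The role of the hypothesis $n \ge 3^{2t}$ is just to guarantee there is enough room: $B$ is long enough that when we push $C$'s $2t+1$ elements to the left, the landing zone $[-n-2t, -n]$ lies strictly left of $-t$, so the final "park $C$ in the centre" maneuver — sliding $C$ back rightward into positions $[-t,t]$ — can be carried out without any flip ever being centred in the forbidden window. (One bookkeeping subtlety: moving $C$ from the far left into the centre means sweeping it rightward past everything that now lies between, and one must order these sweeps so the transient centres skip over $[-t,t]$; this works because each individual element of $C$ can be jumped across the whole window in one step once it is adjacent to it, as a single size-$2t+2$ or larger valid flip, using that the window has odd width $2t+1$.)

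The main obstacle is precisely this last bookkeeping: ensuring that the act of relocating $C$'s $2t+1$ elements into the central slots $[-t,t]$ can be done without any intermediate flip having its midpoint in $[-t,t]$. The trick is that we never flip an interval centred in the window; instead we flip intervals that \emph{contain} the window symmetrically or are offset so their midpoint is a half-integer outside $[-t,t]$, exploiting that $B$ provides a long increasing runway on which to stage these larger flips. I expect the write-up to handle this by explicitly exhibiting, for each element of $C$, one large valid flip that carries it across the window, performed in decreasing order of the elements so that each such flip acts on a currently-increasing block.
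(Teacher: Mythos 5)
Your proposal has a fundamental misreading of the setup that makes the plan collapse. The concatenation $\mathbf{A}\wedge B\wedge C$ is a centred sequence on the interval $[-t,\, n+3t+1]$: concatenating blocks to the right of $\mathbf{A}:[-t,t]\to\ZZ$ extends the domain to the \emph{right} only. There are no positions to the left of $-t$, so ``bring $C$ to the front'' and ``assemble $C$ at positions $[-n-2t,-n]$, far outside $[-t,t]$'' describe a region of the line that does not exist. The target configuration $\mathbf{C}\wedge D$ has $\mathbf{C}$ occupying exactly $[-t,t]$, so the actual task is to move $C$'s $2t+1$ entries \emph{into} the forbidden window, displacing $\mathbf{A}$, not past it.

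That is where the real difficulty lives, and your two proposed mechanisms both fail. Size-$2$ sweeps cannot carry an entry from position $t+1$ to any $j\le t-1$: the intermediate swap $[t-1,t]$ has midpoint $t-\tfrac12\in[-t,t]$ and is forbidden. The ``one large flip'' fallback is also blocked: a flip $[c,d]$ with $d>t$ and $c\le t$ that carries one of $C$'s entries into the window either has midpoint in $[-t,t]$ (forbidden) or has $c\le t$ deep enough that the interval contains part of $\mathbf{A}$ — and $\mathbf{A}$ is only assumed injective, not increasing, so the flip's ``increasing'' validity condition typically fails, and you cannot pre-sort $\mathbf{A}$ in place precisely because all flips inside $[-t,t]$ are disallowed. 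Your remark that $B\prec C$ lets one go from $B\wedge C$ to $C\wedge B$ is a fact about \emph{blocks}, where there is no forbidden-window constraint at all; importing it into the centred-sequence setting with the caveat ``provided all intermediate flips avoid the forbidden centre'' is precisely the lemma you are being asked to prove, not a fact you may assume.

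The paper's actual proof inducts on the left endpoint $k$ of the window, peeling off one position at a time: it recursively shifts a sub-block of $C$ into $[k+1,t]$, stages the remaining entries so that the single flip $[k,\,2t-k+1]$ (whose midpoint is $t+\tfrac12$, safely outside $[-t,t]$) is on an increasing run, and then recurses on the rest. Each level of the recursion consumes a constant fraction of $B$ to create the needed increasing runway, which is exactly why $n\ge 3^{2t}$ appears. Your proposal, if it worked, would use only linearly many entries of $B$ — a good sign, in retrospect, that the argument was too optimistic.
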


\begin{figure}
    \centering
    \includegraphics[scale=1.0]{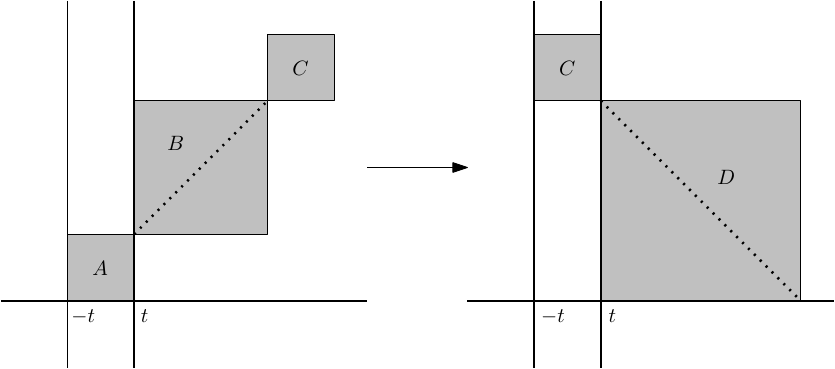}
    \caption{A graphical depiction of Lemma~\ref{lem:shift}, Shifting.}
    \label{fig:shift}
\end{figure}

The idea here is that shifting a block $C$ into the forbidden region is difficult and requires ``ammunition'' in the form of an increasing block $B$. Moreover, after we ``use up'' the ammunition $B$, the resulting decreasing block $D$ is less useful to us.

A graphical depiction of this lemma can be seen in Figure~\ref{fig:shift}. Throughout the paper, we will make extensive use of such figures to illustrate the steps in our construction. These figures will also accurately depict which blocks lie above which other blocks.

\begin{proof}
We will show by induction on $k=t,t-1,\ldots,-t$ that there is some $N_k$ such that, for all $n\geq N_k$, the following holds. Let $\mathbf{A}:[k,t]\to\ZZ$ be a centred sequence and $B,C$ be blocks such that
\begin{itemize}
    \item $|B|=n$, $|C|=t-k+1$,
    \item $B$ is increasing,
    \item $\mathbf{A}\prec B\prec C$.
\end{itemize}
Then one can go from $\mathbf{A}BC$ to $\mathbf{C}D$, where $D$ is decreasing and $\mathbf{C}:[k,t]\to\ZZ$ is the centred sequence with the same underlying sequence as the block $C$.

The base case $k=t$ is simple: we may take $N_t=0$ and perform a single flip on the whole centred sequence $\mathbf{A}BC$. 

For $-t\leq k<t$, set $N_k=2(N_{k+1}+t-k)$. Write $\mathbf{A}=A_1\mathbf{A}'$, where $A_1$ is viewed as a block of size 1 and $\mathbf{A}'$ is the remaining centred sequence. Write $C=C_1C'$, where $C_1$ is a block of size 1 and $C'$ is the remaining block, and  $B=B^1B^2B^3B^4$, where $|B^1|=N_{k+1}$ and $|B^2|=|B^3|=t-k$, so that $|B^4|\geq N_{k+1}$. Apply the induction hypothesis for $k+1$ on $\mathbf{A}'B^1B^2$ to obtain $\mathbf{B^2}E^1$, where $E^1$ is decreasing (see Figure~\ref{fig:shift_pf1}). 

\begin{figure}
    \centering
    \includegraphics[scale=0.9]{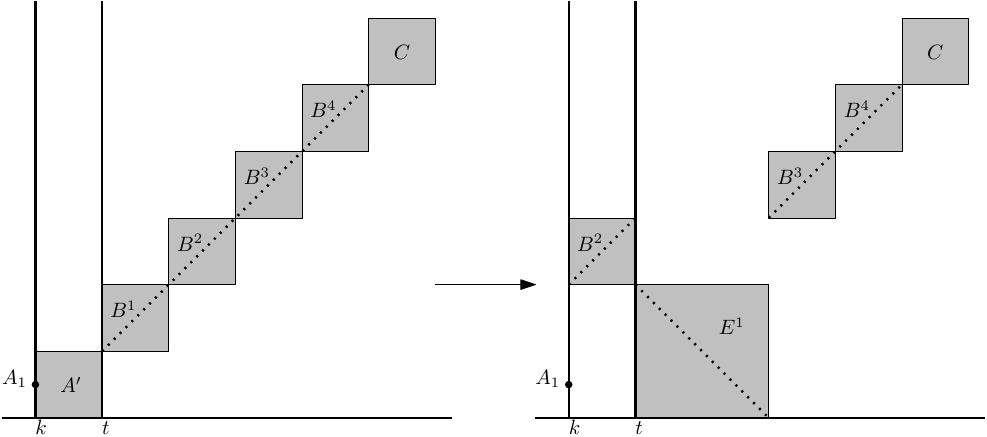}
    \caption{Proof of Lemma~\ref{lem:shift}, part 1.}
    \label{fig:shift_pf1}
\end{figure}

Since $E^1\prec B^3$, we may swap them and go from $E^1 B^3$ to $B^3 E^1$. By continuing to freely swap blocks this way, we can go from $A_1 \mathbf{B^2}  E^1 B^3 B^4 C$ to $A_1 \mathbf{B^2}  B^3 C_1 E^1 B^4 C'$. 
We then perform the flip $[k,2t-k+1]$ to get 
$$C_1 \mathbf{\ol{B^3}}\ \ol{B^2} A_1 E^1 B^4 C'$$ (see Figure~\ref{fig:shift_pf2}).

\begin{figure}
    \centering
    \includegraphics[scale=1.0]{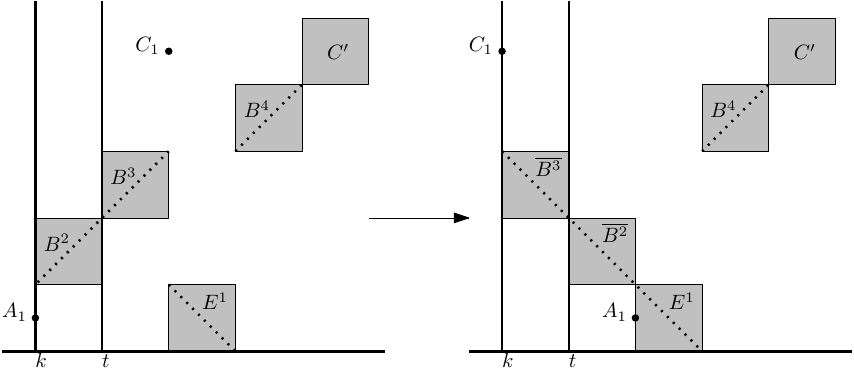}
    \caption{Proof of Lemma~\ref{lem:shift}, part 2.}
    \label{fig:shift_pf2}
\end{figure}

We can go from $\ol{B^2} A_1 E^1$ to $E^2$, where $E^2$ is decreasing. Then we go from $E^2 B^4 C'$ to $B^4 C' E^2$ to obtain
$$C_1 \mathbf{\ol{B^3}} B^4 C' E^2.$$ 
Finally, we again apply the induction hypothesis for $k+1$ to $\mathbf{\ol{B^3}} B^4 C'$ to obtain
$$\mathbf{C} E^3 E^2,$$
so we can set $D=E^3 E^2$, which is decreasing (see Figure~\ref{fig:shift_pf3}).

\begin{figure}
    \centering
    \includegraphics[scale=1.0]{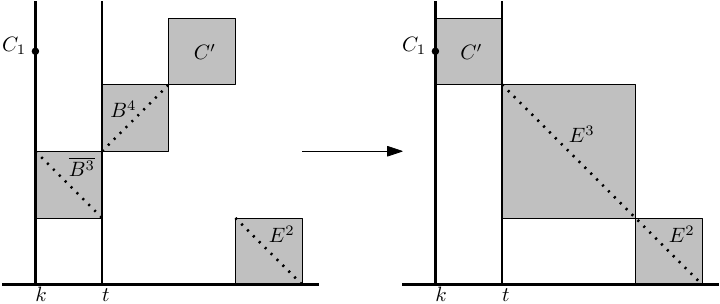}
    \caption{Proof of Lemma~\ref{lem:shift}, part 3.}
    \label{fig:shift_pf3}
\end{figure}

To finish the proof, note that a simple induction shows that $N_k\leq 3^{t-k}$. In particular, $N_{-t}\leq 3^{2t}$, as required.
\end{proof}

When we apply Lemma~\ref{lem:shift}, we will simply say that we \textit{shift} $C$. With one additional step, we also obtain a procedure for bringing a block across 0.

\begin{lem}[Reflection] \label{lem:reflect}
For all $n\geq 3^{2t}+4t+2$, the following holds. Let $\mathbf{A}:[-t,t]\to \ZZ$ be a centred sequence and $B$, $C$, $X$ be blocks such that
\begin{itemize}
    \item $|B|=n$,
    \item $|C|=|X|$,
    \item $B$, $C$, $X$ are increasing,
    \item $X\prec \mathbf{A}\prec B\prec C$.
\end{itemize}
Then one can go from $X \mathbf{A} B C$ to $\ol{C} \mathbf{D} E$, where $\mathbf{D}:[-t,t]\to \ZZ$ is a decreasing centred sequence (with the same underlying sequence as the last $2t+1$ elements of $B$ in reverse), 
$E$ is decreasing and $\mathbf{D}\succ E$ (see Figure~\ref{fig:reflect}). 
\end{lem}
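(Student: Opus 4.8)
The plan is to realise the reflection by a single application of the Shifting lemma followed by one large flip that carries $C$ across the forbidden band $[-t,t]$. Split $B = B_1\wedge B_2\wedge B_3$ with $|B_2|=|B_3|=2t+1$ and $|B_1|=n-4t-2$; the hypothesis $n\ge 3^{2t}+4t+2$ gives $|B_1|\ge 3^{2t}$, so Lemma~\ref{lem:shift} applies to the sub-configuration $\mathbf{A}\wedge B_1\wedge B_2$ sitting inside $X\wedge\mathbf{A}\wedge B_1\wedge B_2\wedge B_3\wedge C$. (The blocks $X$ and $B_3\wedge C$ lie, respectively, at positions below $-t$ and above $n-t-1$, which are disjoint from the positions the flips in the proof of Lemma~\ref{lem:shift} ever touch, so they just come along unchanged; this point should be checked but is routine.) Shifting $B_2$ turns the configuration into $X\wedge\mathbf{B_2}\wedge D_1\wedge B_3\wedge C$, where $\mathbf{B_2}:[-t,t]\to\ZZ$ has the underlying sequence of $B_2$ and $D_1$ is a decreasing block on the set $\mathbf{A}\cup B_1$.

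Next I would move $D_1$ out of the way. Since $D_1\prec B_3\wedge C$ and all of these blocks lie strictly to the right of position $t$, the elementary block-swap operation lets us pass the (increasing) block $B_3\wedge C$ to the left of $D_1$, giving $X\wedge\mathbf{B_2}\wedge B_3\wedge C\wedge D_1$. Now $X\wedge\mathbf{B_2}\wedge B_3\wedge C$ is an increasing run occupying the consecutive positions $[-t-|X|,\,3t+1+|C|]$, and — this is where the hypothesis $|X|=|C|$ enters — the midpoint of this interval is $\frac{2t+1}{2}=t+\frac12\notin[-t,t]$. Hence the flip reversing $X\wedge\mathbf{B_2}\wedge B_3\wedge C$ is valid, and performing it yields $\ol C\wedge\ol{B_3}\wedge\ol{B_2}\wedge\ol X\wedge D_1$; tracking positions (again using $|X|=|C|$) shows that $\ol{B_3}$ now occupies exactly $[-t,t]$, with $\ol C$ sitting immediately to its left.

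To finish, set $\mathbf{D}=\ol{B_3}$: this is a decreasing centred sequence on $[-t,t]$ whose underlying sequence is the last $2t+1$ entries of $B$ reversed, as required. The tail $\ol{B_2}\wedge\ol X\wedge D_1$ lies entirely to the right of $t$, and since $\ol X\prec D_1$ one further block-swap turns it into $\ol{B_2}\wedge D_1\wedge\ol X$, which, by the ordering $X\prec\mathbf{A}\prec B_1\prec B_2\prec B_3\prec C$, is decreasing; call it $E$. The same ordering gives $\mathbf{D}\succ E$ (as $B_3$ dominates $B_2$, $B_1$, $\mathbf{A}$ and $X$), and the configuration is now exactly $\ol C\wedge\mathbf{D}\wedge E$.

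The crux is the middle step: manufacturing a legal flip that moves $C$ past $[-t,t]$. One cannot simply bubble a block across the centre, since the size-$2$ flips involved have midpoints in $[-t,t]$; and the most obvious large flip, reversing $X\wedge\mathbf{B_2}\wedge C$ once $C$ has been brought next to the centre, has midpoint exactly $0$ precisely because $|X|=|C|$. Inserting the auxiliary length-$(2t+1)$ block $B_3$ between the centre and $C$ is exactly what pushes the midpoint of the operative flip out to $t+\frac12$ while still landing $\ol{B_3}$ at $[-t,t]$ — which is also why the lemma needs $n\ge 3^{2t}+4t+2$ rather than just $n\ge 3^{2t}+2t+1$. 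The remaining items for a full write-up are routine: verifying that Lemma~\ref{lem:shift} may be applied within the larger sequence, that each block-swap can be carried out while preserving the internal orders of the blocks (bubble the elements of the larger block leftward, one at a time, past the smaller block only), and that the positions after the large flip are exactly as claimed.
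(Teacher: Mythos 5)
Your proposal is correct and follows essentially the same route as the paper's proof: decompose $B=B^1\wedge B^2\wedge B^3$ with $|B^2|=|B^3|=2t+1$, shift $B^2$ to the centre producing a decreasing tail, move that tail past $B^3\wedge C$, and then perform the single long flip $[-t-|C|,\,3t+1+|C|]$ (valid since its midpoint is $t+\tfrac12$), landing $\ol{B^3}$ on $[-t,t]$ and $\ol{C}$ to its left; the final block-swap arranging the tail into a decreasing block is also as in the paper. Your side remarks — why $|X|=|C|$ matters, why the extra $2t+1$ in the lower bound on $n$ is needed, and that the flips in the Shifting subroutine do not disturb $X$, $B^3$ or $C$ — are all accurate.
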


\begin{figure}
    \centering
    \includegraphics[scale=1.0]{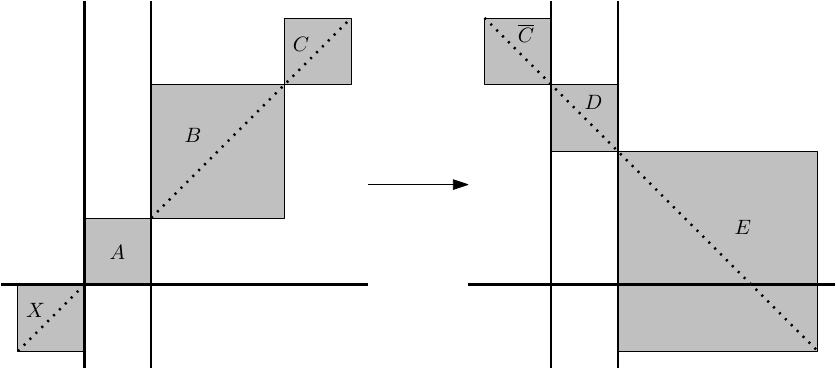}
    \caption{Reflection.}
    \label{fig:reflect}
\end{figure}

\begin{proof}
Decompose $B=B^1 B^2 B^3$, where $|B^2|=|B^3|=2t+1$. Then, $|B^1|\geq 3^{2t}$, so, by shifting $B^2$ using Lemma~\ref{lem:shift}, we can go from $\mathbf{A} B C$ to
$$\mathbf{B^2} D' B^3 C,$$
where $\mathbf{B^2}:[-t,t]\to\ZZ$ and $D'$ is decreasing (see Figure~\ref{fig:reflect1}). 

\begin{figure}
    \centering
    \includegraphics[scale=1.0]{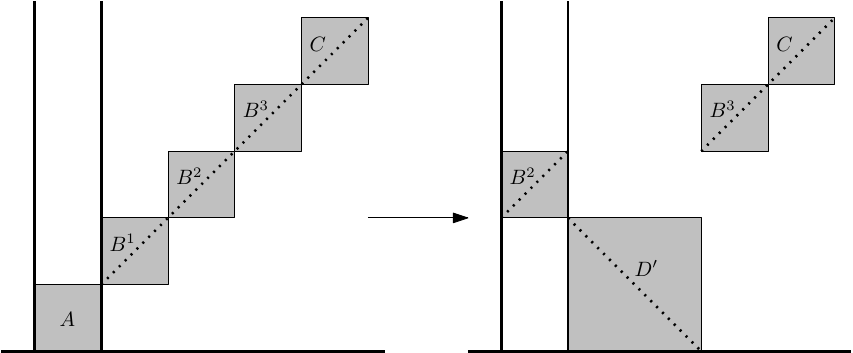}
    \caption{Proof of Lemma~\ref{lem:reflect}, part 1.}
    \label{fig:reflect1}
\end{figure}

\begin{figure}
    \centering
    \includegraphics[scale=0.9]{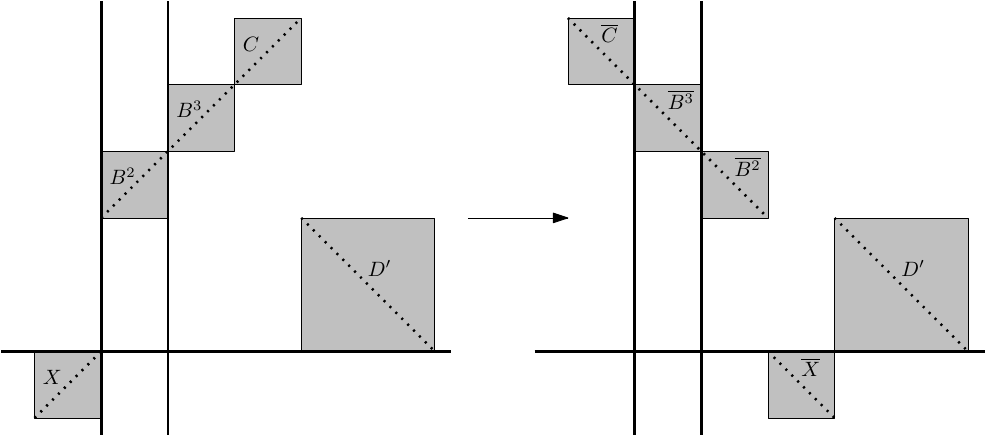}
    \caption{Proof of Lemma~\ref{lem:reflect}, part 2.}
    \label{fig:reflect2}
\end{figure}

Thus, after moving $D'$ to the right, we can go from $X \mathbf{A} B C$ to $X \mathbf{B^2} B^3 C D'$. Now perform the flip $[-t-|C|,3t+1+|C|]$ to get
$$\ol{C}\ \mathbf{\ol{B^3}}\ \ol{B^2}\ \ol{X}  D',$$
from which we can go to $\ol{C} \mathbf{D} E$, where $\mathbf{D}=\mathbf{\ol{B^3}}$ and $E=\ol{B^2} D' \ol{X}$ (see Figure~\ref{fig:reflect2}).
\end{proof}

When we apply Lemma~\ref{lem:reflect}, we will simply say that we \textit{reflect} $C$. 

To close this section, we note that the operations above also hold if we mirror the blocks and centred sequences horizontally and vertically. For example, by mirroring Reflection, we obtain the following result: for $n\geq 3^{2t}+4t+2$, let $\mathbf{A}:[-t,t]\to \ZZ$ be a centred sequence and $B$, $C$, $X$ be blocks such that
\begin{itemize}
    \item $|B|=n$,
    \item $|C|=|X|$,
    \item $B$, $C$, $X$ are increasing,
    \item $C\prec B\prec \mathbf{A}\prec X$.
\end{itemize}
Then one can go from $C B \mathbf{A} X$ to $E \mathbf{D} \ol{C}$, where $\mathbf{D}:[-t,t]\to \ZZ$ is a decreasing centred sequence, $E$ is decreasing and $\mathbf{D}\prec E$ (see Figure~\ref{fig:reflect_rev}).

\begin{figure}
    \centering
    \includegraphics[scale=1.0]{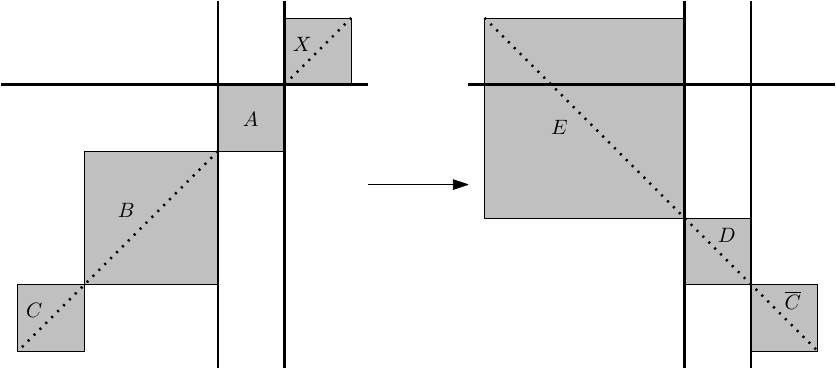}
    \caption{The mirrored version of Reflection.}
    \label{fig:reflect_rev}
\end{figure}

\section{The recursive step}

In this section, we show how to perform a certain recursive step which lies at the heart of our construction and is arguably its most difficult part. To state the result, we need some further definitions.

\begin{defn}
Given a block $B$ of size $n$ with sequence $B_1,\ldots,B_n$, the \textit{width} of $B$, denoted by $\width(B)$, is the largest integer $k$ such that there exist $i_1<i_2<\cdots<i_k$ with $B_{i_1}>B_{i_2}>\cdots>B_{i_k}$.
\end{defn}

Note that, by Dilworth's theorem, this is the same as the smallest integer $k$ such that $B_1,\ldots,B_n$ can be partitioned into $k$ disjoint increasing subsequences.

\begin{defn}
Given a block $B$, denote by $B^+$ the block corresponding to the subsequence of $B$ consisting of all the positive values. Denote by $B^-$ the block corresponding to the subsequence of $B$ consisting of all the negative values.
\end{defn}

\begin{defn}
Let $B$ be a block of size $n$ and $r\geq 0$ a real number. We say that $B$ is \textit{$r$-balanced} if 
\begin{enumerate}
    \item the block $B^-$ is increasing,
    \item for each $1\leq k\leq n$, the initial segment $B'=B|_{[k]}$ has $|B'^-|\geq r\cdot \width(B'^+)$.
\end{enumerate}
\end{defn}

The following key property of $r$-balanced blocks will be crucial to our main construction.

\begin{lem} \label{lem:decomp}
Suppose $B$ is a block that is $r$-balanced for some real $r\geq 0$. Then there is some $k$ such that a block of the form 
$$C_1 \cdots  C_k$$
can be obtained from $B$, where each block $C_i$ is increasing, $|C_i^-|\geq \floor{r}$ and $C_1^-\prec C_2^-\prec\cdots\prec C_k^-$.
\end{lem}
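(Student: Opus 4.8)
The plan is to induct on $\width(B^+)$. If $\width(B^+) = 0$, then $B$ has no positive entries, so $B = B^-$ is already increasing by property (1), and we take $k = 1$ with $C_1 = B$; the condition $|C_1^-| \geq \floor{r}$ follows from applying property (2) with $k = |B|$, since $\width(B'^+) \geq 1$ unless $B$ has no positive part at all, in which case $|B^-| = |B|$ and the bound is vacuous or we simply note $\width$ of the full positive part is $0$ and there is nothing to balance — either way the claim is trivial. (The genuinely interesting base-ish case is $\width(B^+) = 1$, where $B^+$ is itself an increasing block; I would handle it as a special instance of the inductive step below.)

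For the inductive step, suppose $\width(B^+) = w \geq 1$. The key idea is to peel off the ``first'' increasing chain from the positive part. Concretely, I would greedily extract a maximal increasing subsequence $S$ from $B^+$ chosen so that the remaining positive subsequence has width $w - 1$: by Dilworth's theorem $B^+$ decomposes into $w$ increasing chains, and one can pick $S$ to be the chain containing the first positive entry, or better, a leftmost-greedy chain. The crucial claim is then that, using valid flips (size-$2$ swaps suffice, as the excerpt notes), one can move $B$ to a block of the form $C_1 \wedge B''$, where $C_1$ is increasing, $C_1^-$ consists of a prefix of $B^-$, $C_1^+ = S$, $|C_1^-| \geq \floor{r}$, $C_1^- \prec (B'')^-$, and $B''$ is again $r$-balanced with $\width((B'')^+) = w - 1$. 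Applying the induction hypothesis to $B''$ and concatenating gives the result.

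To extract $C_1$: reading $B$ left to right, whenever a positive entry belonging to $S$ is preceded (not necessarily immediately) by entries not in $C_1$, those intervening entries are either negative (all of which are mutually increasing and smaller than every positive entry, hence swappable to the right past the $S$-entry) or positive entries not in $S$. The danger is a positive non-$S$ entry that is \emph{larger} than the next $S$-entry and sits to its left — such a pair cannot be swapped validly. Here is where $r$-balancedness does the work: I claim $S$ can be chosen (as the chain of minimal elements, say) so that no such obstruction arises, or alternatively that whenever it does, enough negative entries have already appeared to the left that we can interleave. The accounting is: at the moment we want to finish assembling $C_1$, the prefix $B'$ of $B$ processed so far has $\width(B'^+) \geq 1$, so $|B'^-| \geq r$, giving $|C_1^-| \geq \floor{r}$; and by taking $C_1^-$ to be exactly the negative entries appearing before the last element of $S$, we both meet this bound and ensure $C_1^- \prec (B'')^-$ since $B^-$ is increasing.

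The main obstacle I anticipate is precisely making the extraction of $C_1$ rigorous via valid flips: one must verify that the sequence of size-$2$ swaps needed to slide $S$ together (collecting its negative prefix along the way) never requires reversing a decreasing adjacent pair, and that after removing $C_1$ the leftover block $B''$ still satisfies both defining properties of $r$-balancedness — property (1) is immediate since $(B'')^-$ is a suffix of the increasing $B^-$, but property (2) for prefixes of $B''$ requires re-deriving the inequality $|(B'')'^-| \geq r \cdot \width((B'')'^+)$ from the corresponding inequality for a related prefix of the original $B$, using that we removed an increasing chain (which drops $\width$ of the positive part by exactly one on every prefix) together with a matching or larger number of negative entries. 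Carefully choosing which negatives go into $C_1^-$ versus $B''$ is the linchpin of that verification.
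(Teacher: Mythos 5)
Your inductive reframing is close in spirit to the paper's argument, and the balancedness bookkeeping you sketch for the leftover block $B''$ is essentially correct, but the step you yourself flag as the main obstacle --- choosing which increasing chain $S$ of $B^+$ to extract so that all the size-$2$ swaps are valid --- is the real content of the lemma, and your proposed choice (``the chain of minimal elements'') fails. For a bare example, if $B^+$ appears in the order $3,1,4,2$, the chain $\{1,2\}$ cannot be slid to the front: the adjacent pair $3,1$ is decreasing, so no valid flip moves $1$ past $3$, whereas the chain $\{3,4\}$ extracts fine (swap $1,4$ to reach $3,4,1,2$). Your fallback of interleaving with negatives does not rescue this either, since negatives are smaller than every positive; a negative sitting between $3$ and $1$ only lets you move the $3$, never the $1$.

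The correct choice, which the paper pins down via an explicit greedy algorithm, is the chain of left-to-right maxima of $B^+$: take the leftmost positive entry, then repeatedly append the earliest subsequent positive entry with a strictly larger value, giving positions $i_1<i_2<\cdots$. This choice has the non-crossing property that validates every swap: if a positive $B_p\notin S$ precedes some $B_{i_j}\in S$, let $j'$ be largest with $i_{j'}<p$; the greedy rule forces $B_p\leq B_{i_{j'}}<B_{i_j}$, so once adjacent the pair is increasing and may be flipped. It also delivers exactly the width decrement your induction needs: the paper proves that for the full greedy decomposition $S_1,\ldots,S_k$ of $B^+$, every prefix $B|_{[m]}$ has $\width(B|_{[m]}^+)$ equal to the number of chains started by position $m$, and $S_1$ always starts first, so removing $S_1$ drops every prefix width by exactly one whenever it was positive. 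With $S=S_1$ and $C_1^-$ the first $\floor{r}$ elements of $B^-$, your claims $|C_1^-|\geq\floor{r}$, $C_1^-\prec(B'')^-$, and $r$-balancedness of $B''$ all follow. The paper avoids the induction entirely by assembling all the $C_j$ in a single pass from the greedy decomposition, invoking $r$-balancedness once to place at least $jr$ negatives before the start of $S_j$; that packaging is tidier and sidesteps re-verifying balancedness at each stage, but once $S$ is chosen correctly the difference from your inductive scheme is cosmetic.
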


\begin{proof}
Note that if $B$ is $r$-balanced, then it is $r'$-balanced for any $r'\leq r$. Hence, without loss of generality, we can assume that $r$ is an integer. 

Let $A$ be a block of size $n$. We shall decompose $A$ into a collection of increasing subsequences. First set $I_0=[n]$. We define an increasing subsequence $i_{1,1},\ldots,i_{1,n_1}$ from $I_0$ as follows. Let $i_{1,1}$ be the smallest element of $I_0$. Given $i_{1,k}$, define $i_{1,k+1}$ to be the smallest element in $I_0$ larger than $i_{1,k}$ such that $A_{i_{1,k+1}}>A_{i_{1,k}}$. If no such $i_{1,k+1}$ exists, then we terminate the sequence and set $I_1=I_0\setminus\set{i_{1,1},\ldots,i_{1,n_1}}$. We then define $i_{2,1},\ldots,i_{2,n_2}$ similarly in terms of $I_1$ and set $I_2=I_1\setminus\set{i_{2,1},\ldots,i_{2,n_2}}$. Repeating this procedure until we reach $d$ with $I_d = \emptyset$, we see that we have partitioned $A$ into $d$ increasing subsequences $A_{i_{k,1}},\ldots,A_{i_{k,n_k}}$ for $k=1,\ldots,d$. 

We claim that $d=\width(A)$. In fact, we shall show that for any $1\leq m\leq n$,  if $k$ is the largest integer for which $i_{k,1}\leq m$, then $\width(A|_{[m]})=k$. Indeed, $\width(A|_{[m]})\leq k$, since we can cover $A|_{[m]}$ with $k$ increasing subsequences $A_{i_{j,1}},\ldots,A_{i_{j,n_j}}$ for $j=1,\ldots,k$. To show that $\width(A|_{[m]})\geq k$, we find a decreasing subsequence of length $k$. Let $j_k=i_{k,1}$. Since $i_{k,1}\not\in\set{i_{k-1,1},\ldots,i_{k-1,n_{k-1}}}$, there is some $l$ such that $i_{k-1,l}<j_k$ and $A_{i_{k-1,l}}>A_{j_k}$. Set $j_{k-1}=i_{k-1,l}$. By repeating this step, we eventually obtain a sequence $j_k>j_{k-1}>\cdots>j_1$ with $A_{j_k}<A_{j_{k-1}}<\cdots<A_{j_1}$, so we have the required decreasing subsequence of length $k$.

Now consider the above decomposition into increasing subsequences applied to $B^+$. If $k=\width(B^+)$, then we can partition $B^+$ into $k$ increasing subsequences $S_1,\ldots,S_k$, the $j$th of which starts from $B_{l_j}$ for some $l_j$. Set $C_1$ to be the concatenation of the first $r$ elements of $B^-$ and $S_1$. Set $C_2$ to be the concatenation of the next $r$ elements of $B^-$ and $S_2$ and so on, until $C_k$, which we take to be the concatenation of the remaining elements of $B^-$ and $S_k$. Note that, for each $j\leq k$, $\width(B|_{[l_j]})=j$, so, since $B$ is $r$-balanced, there are at least $jr$ elements of $B^-$ that come before $S_j$. Since $B^-$ is also increasing by the definition of $r$-balancedness, we can go from $B$ to $C_1\cdots C_k$, where each $C_i$ is increasing and $|C_i^-|\geq r$. Furthermore, $C_1^-\prec C_2^-\prec\cdots\prec C_k^-$, since they are the elements of $B^-$ in the right order.
\end{proof}

The following lemma is the main recursive step in our construction (see Figure~\ref{fig:recursive_pic}).

\begin{lem} \label{lem:main}
Let $d,n\geq 1$, set $T=3^{2t}$ and define sequences $\alpha_0,\alpha_1,\ldots$ and $\beta_0,\beta_1,\ldots$ inductively by
\begin{itemize}
    \item $\alpha_0=T+4t+2$,
    \item $\beta_0=0$,
    \item $\alpha_{i+1}=d\alpha_i+2Td^i+d$,
    \item $\beta_{i+1}=d\beta_i+\frac{d^{i+1}}{3T}$.
\end{itemize}
Suppose that $d\geq 9T$. Then, for each $k\geq 0$ and any centred sequence $\mathbf{I}:[-t,t]\to\ZZ$, there are increasing blocks $X,Y$ 
of some sizes with $X\prec \mathbf{I}\prec Y$ and $X\prec 0\prec Y$ such that one can go from $X \mathbf{I} Y$ to
$$L_k W_k \mathbf{A}_k B_k R_k,$$
where:
\begin{enumerate}
    \item $\mathbf{A}_k:[-t,t]\to\ZZ$,
    \item $W_k,L_k\succ 0\succ R_k$,
    \item if $k>0$, then $\mathbf{A}_k\prec B_k$, while if $k=0$, then $\mathbf{A}_k\succ B_k$,
    \item $W_k$ only contains elements from $Y$,
    \item $W_k$ is of the form 
    $$K_1 \cdots K_m M_m\cdots M_1,$$
    where $m=d^k$, each $K_i$ is decreasing, $|K_i|=n,|M_i|=1$ and 
    $$K_m\succ M_m\succ K_{m-1}\succ M_{m-1}\succ\cdots\succ K_1\succ M_1,$$
    \item $\width(B_k^+)\leq \alpha_k$,
    \item $|B_k^-|\geq \beta_k$,
    \item $B_k$ is $(\beta_k/\alpha_k)$-balanced.
\end{enumerate}
Furthermore, one can take $|X|,|Y|\leq 10d^{2k+1}n$.
\end{lem}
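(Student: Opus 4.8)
The plan is to prove the lemma by induction on $k$.

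\emph{Base case $k=0$.} Here $\beta_0=0$, so conditions~7 and~8 only ask that $B_0^-$ be increasing, and we can arrange for $B_0$ to be entirely positive. Take $X$ to be an increasing negative block of size $n+2$, take $Y=Y'\wedge C$ with $Y'$ increasing positive of size $\alpha_0=T+4t+2$ and $C$ increasing positive of size $n+2$, positioned so that $X\prec\mathbf I\prec Y'\prec C$, and apply Reflection (Lemma~\ref{lem:reflect}) to $X\wedge\mathbf I\wedge Y'\wedge C$ with middle block $Y'$ of the minimal admissible size $\alpha_0$. This reaches $\ol C\wedge\mathbf D\wedge E$, where $\mathbf D:[-t,t]\to\ZZ$ is decreasing and positive, $\mathbf D\succ E$, and $E$ is decreasing and made up of a run of $2t+1$ positive elements, then the at most $T+2t+1$ positive and at most $2t+1$ negative elements inherited from $\mathbf I$, then $\ol X$. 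Now set $\mathbf A_0=\mathbf D$, split $\ol C=L_0\wedge K_1\wedge M_1$ with $|K_1|=n$ and $|M_1|=1$, and split $E=B_0\wedge R_0$ at the sign change. Conditions~1--5 hold with $m=d^0=1$; condition~6 holds since $\width(B_0^+)\le(2t+1)+(T+2t+1)=\alpha_0$; and $|X|,|Y|\le 10dn$ since $d\ge 9T$.

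\emph{Inductive step $k\to k+1$.} Assume the statement for $k$. We perform $d$ rounds. In round $i$ we apply the induction hypothesis for $k$ to a consecutive block $X_i\wedge\mathbf I^{(i)}\wedge Y_i$ sitting around the current centred sequence $\mathbf I^{(i)}$ (with $\mathbf I^{(1)}=\mathbf I$, and $X_i$, $Y_i$ freshly carved from the still‑unused parts of $X$, $Y$), producing $L_k^{(i)}\wedge W_k^{(i)}\wedge\mathbf A_k^{(i)}\wedge B_k^{(i)}\wedge R_k^{(i)}$. Since $B_k^{(i)}$ is $(\beta_k/\alpha_k)$-balanced, Lemma~\ref{lem:decomp} rewrites it as $C_1\wedge\cdots\wedge C_j$ with $j=\width(B_k^{(i)+})\le\alpha_k$, each $C_l$ increasing, $|C_l^-|\ge\floor{\beta_k/\alpha_k}$ and $C_1^-\prec\cdots\prec C_j^-$; using only swaps of the form $B\wedge C\to C\wedge B$ with $B\prec C$, we pull the positive parts left and the negative parts right, so $B_k^{(i)}$ becomes $P^{(i)}\wedge Q^{(i)}$ with $P^{(i)}=C_1^+\wedge\cdots\wedge C_j^+$ positive of width $\le\alpha_k$ and $Q^{(i)}=C_1^-\wedge\cdots\wedge C_j^-$ increasing negative of size $\ge\beta_k$. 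We then slide $L_k^{(i)}\wedge W_k^{(i)}$ leftward onto the accumulating left‑hand pile, slide $P^{(i)}\wedge Q^{(i)}\wedge R_k^{(i)}$ rightward onto the accumulating right‑hand pile, and bring the leftover fresh blocks back flush against $\mathbf A_k^{(i)}$, which becomes $\mathbf I^{(i+1)}$. Throughout, the value ranges of the blocks carved from $X$ and $Y$ must be chosen with care so that, round by round, the fresh blocks still straddle the evolving centred sequence and so that the $d$ copies of the staircase and the pieces $Q^{(i)}$ end up consistently ordered; this is the fiddliest part of the bookkeeping.

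After the $d$ rounds we hold $d$ copies of the $W_k$-staircase, and we merge them into one staircase with $m=d^{k+1}$ blocks by repeatedly bubbling the descending ($M$-)half of each copy rightward past the ascending ($K$-)halves of the later copies; this produces exactly the form demanded in condition~5. We then take $L_{k+1}$ to be the residual positive material collected on the left, $R_{k+1}$ the residual negatives on the right, and $B_{k+1}$ the block built from $P^{(1)}\wedge\cdots\wedge P^{(d)}$ interleaved with $Q^{(1)}\wedge\cdots\wedge Q^{(d)}$, together with the at most $2Td^k+d$ further increasing strands and the extra negatives generated while crossing $0$ and merging. Then $\width(B_{k+1}^+)\le d\alpha_k+2Td^k+d=\alpha_{k+1}$ (condition~6) and $|B_{k+1}^-|\ge d\beta_k+d^{k+1}/(3T)=\beta_{k+1}$ (condition~7), while condition~8 follows by checking the $r$-balance inequality on every prefix of $B_{k+1}$, using that each $B_k^{(i)}$ is $(\beta_k/\alpha_k)$-balanced, that the copies are stacked in order, and $d\ge 9T$ to absorb lower‑order terms — and the recursions defining $\alpha_i$ and $\beta_i$ are calibrated precisely so that this works. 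Finally, each round uses $\le 2\cdot 10d^{2k+1}n$ fresh elements by the induction hypothesis and the $0$-crossings use $O(Td^{k+1}n)$ more, so in total $|X|,|Y|\le 10d^{2k+2}n\le 10d^{2(k+1)+1}n$. The main obstacle is the inductive step: every flip must avoid being centred in the forbidden interval $[-t,t]$, so each crossing of the center must go through Shifting (Lemma~\ref{lem:shift}) or Reflection (Lemma~\ref{lem:reflect}); one must keep precise track of which blocks lie above which others and of the value ranges of the injected blocks; and, hardest of all, one must verify that the assembled $B_{k+1}$ is $(\beta_{k+1}/\alpha_{k+1})$-balanced on every prefix while keeping $\width(B_{k+1}^+)\le\alpha_{k+1}$, an inequality chase driven by the arithmetic of $\alpha_i$, $\beta_i$ and the assumption $d\ge 9T$ that constitutes the technical heart of the proof.
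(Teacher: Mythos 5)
Your base case matches the paper's (both apply Reflection and read off $L_0,W_0,\mathbf A_0,B_0,R_0$ from $\ol C\wedge\mathbf D\wedge E$), and your Step~1, repeating the induction hypothesis $d$ times while carving nested pieces out of $X$ and $Y$, is also the paper's opening move. But the inductive step then goes wrong in a way that is not merely a bookkeeping omission.

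The central problem is your treatment of $B_k^{(i)}$. You invoke Lemma~\ref{lem:decomp} to rewrite $B_k^{(i)}$ as $C_1\wedge\cdots\wedge C_j$ and then swap so that it becomes $P^{(i)}\wedge Q^{(i)}$ with all positives in front of all negatives. This forward swap ($B\wedge C\to C\wedge B$ with $B\prec C$) is valid, but it is irreversible within the allowed operations, and it destroys exactly the invariant you need: once the positives are batched first, any initial segment of $P^{(i)}\wedge Q^{(i)}$ that lies inside $P^{(i)}$ has positive width up to $\alpha_k$ and zero negatives, so $(\beta_k/\alpha_k)$-balance is gone and cannot be recovered. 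You later speak of ``interleaving'' the $P^{(i)}$ and $Q^{(i)}$ and of ``extra negatives generated while crossing $0$,'' but the swap direction means $Q^{(i)}$ sits to the right of $P^{(i)}$ and cannot be moved back in front of it; and even the friendly interleaving $Q^{(1)}\wedge P^{(1)}\wedge Q^{(2)}\wedge P^{(2)}\wedge\cdots$ only gives ratio $\beta_k/\alpha_k$ on the prefix $Q^{(1)}\wedge P^{(1)}$, which is strictly less than $\beta_{k+1}/\alpha_{k+1}$ (the paper's Claim shows the ratio strictly increases), so condition~8 fails. The paper does not decompose $B_k^{(i)}$ at all (Lemma~\ref{lem:decomp} is only used later, in Section~4); it keeps $B'=B_k^d\wedge\cdots\wedge B_k^1$ intact, shows $B'$ inherits $(\beta_k/\alpha_k)$-balance from the order of the pieces, and then prepends $P\wedge C$ where $P$ is a fresh block of $p\ge d^{k+1}/(3T)$ negatives (taken from $X$ and brought over by repeated mirrored Reflections) and $C$ is a positive block of width $\le 2d^kT+d$ generated by the staircase tails. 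That batch $P$ is the missing piece of negative budget that covers $\width(C)$ and makes the $(\beta_{k+1}/\alpha_{k+1})$-balance inequality go through on every prefix; your sketch has no analogue of it.

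A second, smaller gap: in the inductive step the induction hypothesis must be applied with block size $n+1$, not $n$, because the new $W_{k+1}$ needs $K_i$'s of size exactly $n$ and the extra element stripped from each old $K_j^i$ (the $O_i$'s) is what supplies the tail $M_m\wedge\cdots\wedge M_1$ of $W_{k+1}$. Your ``merge the $d$ staircases by bubbling the $M$-halves rightward'' only does the easy half (the paper's Step~2) and does not produce the shortened $K$'s or the new $M$'s, which requires the involved cascade of mirrored Reflections and a final mirrored Shift (the paper's Step~3). Your size bound also silently uses $n$ where the recursion is in $n+1$, though that is repairable.
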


Though somewhat hidden in the detail of the induction hypothesis, the key point here is that one can make $B_k$ $r$-balanced for any given $r$ by choosing $k$ and $d$ appropriately. It is also worth noting that we will only make use of the lemma with $n = 1$, but our induction argument requires that we consider larger values of $n$.

\begin{figure}
    \centering
    \includegraphics[scale=0.9]{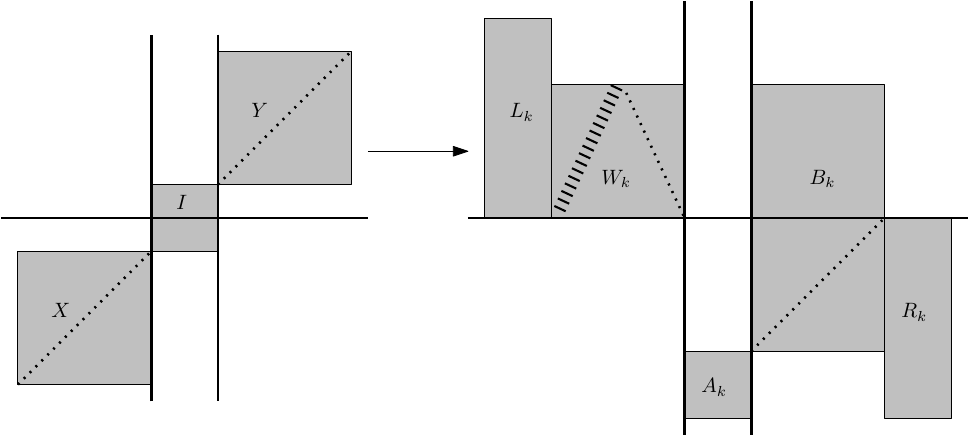}
    \caption{The main recursive step.}
    \label{fig:recursive_pic}
\end{figure}

\begin{proof}
We proceed by induction on $k$. We first prove the base case $k=0$. Set $|X|=n+1$ and $Y=C_1 C_2 C_3$, where $|C_1|=T+2t+1$, $|C_2|=2t+1$ and $|C_3|=n+1$. By reflecting $C_3$, we can go from $X \mathbf{I} Y$ to 
$$\ol{C_3}\ \mathbf{\ol{C_2}}\ \ol{C_1} D \ol{X},$$
where $D$ is the block we get by sorting $\mathbf{I}$ in decreasing order (see Figure~\ref{fig:k0}). 
But this completes the base case by setting $L_0=\emptyset$, $W_0=\ol{C_3}$, $\mathbf{A}_0=\mathbf{\ol{C_2}}$, $B_0=\ol{C_1} D^+$ and $R_0=D^- \ol{X}$ and noting that $\width(B_0)=|B_0|\leq T+4t+2=\alpha_0$.

\begin{figure}
    \centering
    \includegraphics[scale=1.0]{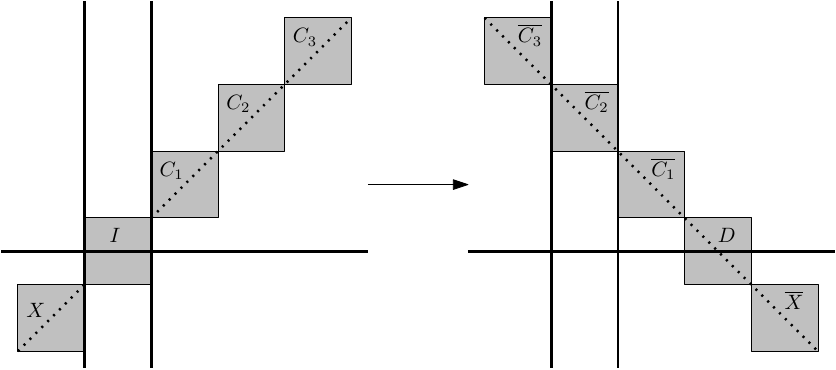}
    \caption{The case $k=0$.}
    \label{fig:k0}
\end{figure}

We now move on to the induction proper, assuming that the result holds for $k$ and then showing that it also holds for $k+1$. We first describe the overall strategy, which is driven by the need to maintain the structure of the block $W_{k+1}$, which has a head (the part containing the $K_i$) and a tail (the part containing the $M_i$). The first step is to perform the construction for $(d,n+1,k)$ $d$ times, thereby obtaining blocks $W_k^1,\ldots,W_k^d$. Next, we move the tails of the $W_k^i$ to the right. These will eventually be part of $B_{k+1}$ and their structure will allow us to boost the balancedness of that block. Finally, we peel off one layer from the heads of the $W_k^i$ and use it as ammunition to move elements into $B_{k+1}^-$. Once this is used up, it then forms the tail of $W_{k+1}$.\\

\noindent
\textbf{Step 1:} Repeating the induction hypothesis $d$ times.\\

\noindent
Write 
$$X \mathbf{I} Y=X' P_d P_{d-1} \cdots P_1 \mathbf{I} Q_1 Q_2 \cdots Q_d Y',$$
where $P_i$, $Q_i$ are blocks of suitable lengths for what follows. 
By our induction hypothesis (using $k$ and $n+1$ instead of $n$), we can go from $P_1 \mathbf{I} Q_1$ to $L_k^1 W_k^1 \mathbf{A}_k^1 B_k^1 R_k^1$. 
Since $P_i\prec P_1 \mathbf{I} Q_1\prec Q_i$ for $i>1$, we can therefore go from $X \mathbf{I} Y$ to
$$L_k^1 X' W_k^1 P_d\cdots P_2 \mathbf{A}_k^1 Q_2\cdots Q_d B_k^1 Y' R_k^1$$
(see Figure~\ref{fig:kinduct1}).

\begin{figure}
    \centering
    \includegraphics[scale=0.9]{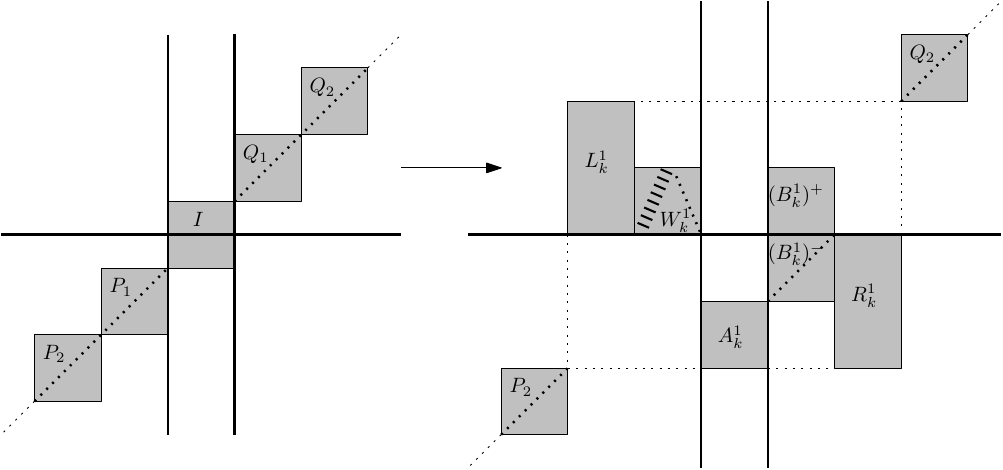}
    \caption{Step 1, part 1.}
    \label{fig:kinduct1}
\end{figure}

Repeating our induction hypothesis on $P_2 \mathbf{A}_k^1 Q_2$ to get $L_k^2 W_k^2 \mathbf{A}_k^2 B_k^2 R_k^2$ and so on, we can eventually go to
$$L' X' W_k^1\cdots W_k^d \mathbf{A}_k^d B' Y' R',$$
where $L'=L_k^1\cdots L_k^d$, $R'=R_k^d \cdots R_k^1$ and $B'=B_k^d \cdots B_k^1$ (see Figure~\ref{fig:kinduct2}). Since $W_k^i$ only contains elements from $Q_i$, we have $W_k^1 \prec W_k^2 \prec \cdots \prec W_k^d$, as depicted in Figure~\ref{fig:kinduct2}. 

\begin{figure}
    \centering
    \includegraphics[scale=1.0]{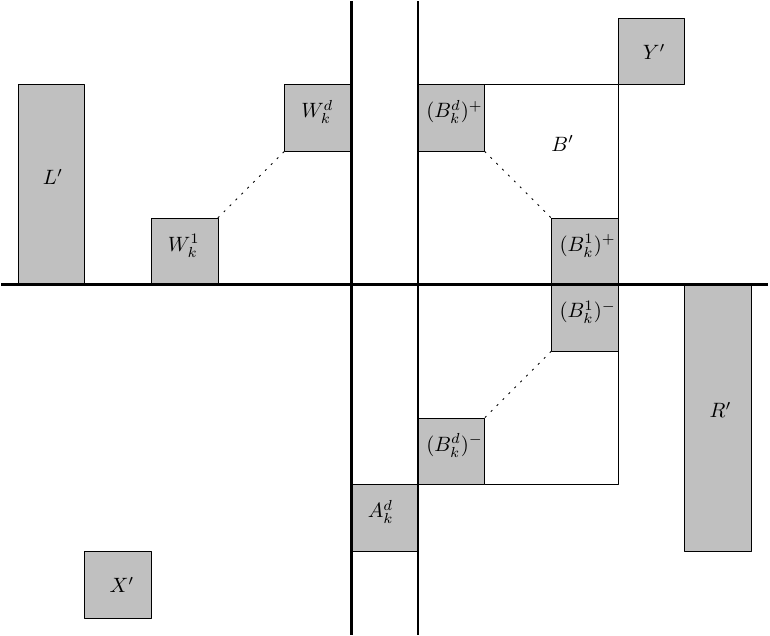}
    \caption{Step 1, part 2.}
    \label{fig:kinduct2}
\end{figure}

If $k=0$, then $\mathbf{A}_k^{i-1}\succ B_k^{i-1}$, $(B_k^i)^+$ only contains elements from $\mathbf{A}_k^{i-1} Q_i$ and $(B_k^i)^-$ only contains elements from $P_i$. Otherwise, if $k>0$, then $\mathbf{A}_k^{i-1}\prec B_k^{i-1}$, $(B_k^i)^+$ only contains elements from $Q_i$ and $(B_k^i)^-$ only contains elements from $P_i \mathbf{A}_k^{i-1}$. In either case, we have $(B_k^d)^+\succ (B_k^{d-1})^+\succ \cdots\succ (B_k^1)^+$ and $(B_k^d)^-\prec \cdots\prec (B_k^1)^-$, 
so $(B')^-$ is increasing. Furthermore, $|B'^-|=|(B_k^1)^-|+\cdots+|(B_k^d)^-|\geq d\beta_k$, $\width(B'^+)\leq \width((B_k^1)^+)+\cdots+\width((B_k^1)^+)\leq d\alpha_k$ and, since each $B_k^i$ is $(\beta_k/\alpha_k)$-balanced, $B'$ is $(\beta_k/\alpha_k)$-balanced.\\

\noindent
\textbf{Step 2:} Moving the tails of each $W_k$ to the right.\\

\noindent
Write $W_k^i=K_1^i K_2^i \cdots K_m^i M_m^i\cdots M_1^i$, where $m=d^k$ and $|K_j^i|=n+1$. Decompose $Y'$ into $S_1 \cdots S_m Y''$, where $|S_i|=T+d+4t+2$. 
Further decompose each $S_i$ into $T_i U_i$ where $|T_i|=T+2t+1$ and $|U_i|=d+2t+1$. Go to
$$L' X' W_k^1\cdots W_k^d  \mathbf{A}_k^d T_1 U_1 B' S_2\cdots S_m Y'' R'.$$
By performing suitable transpositions to move the points $M_j^i$, we can go to
$$L' X' W' M_m^1 M_m^2\cdots M_m^d M_{m-1}^1\cdots M_1^d \mathbf{A}_k^d T_1 U_1 B' S_2\cdots S_m Y'' R',$$
where $W'=K_1^1 K_2^1 \cdots K_m^1 K_1^2 \cdots K_m^d$ (see Figure~\ref{fig:kinduct3}). 

\begin{figure}
    \centering
    \includegraphics[scale=1.0]{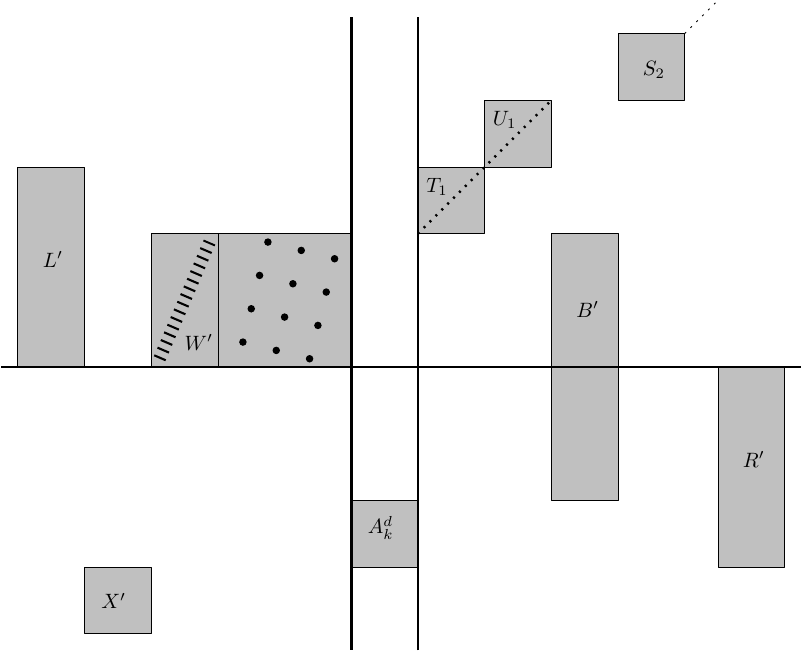}
    \caption{Step 2, part 1.}
    \label{fig:kinduct3}
\end{figure}

We would now like to move all the $M_j^i$ to the right. Write $T_1=T_1' J_1$, where $|J_1|=2t+1$ and $|T_1'|=T$. By shifting $J_1$, we can go to
$$L' X' W' M_m^1 \cdots M_1^d  \mathbf{J}_1 C_1 U_1 B' S_2\cdots S_m Y'' R',$$
where $\mathbf{J}_1:[-t,t]\to \ZZ$ is increasing and $C_1$ is decreasing, noting that $|C_1|=|T_1'|+|A_k^d|=T+2t+1$. If $k=0$, then $\mathbf{A}_0^d\succ 0$ implies that $C_1^-=\emptyset$. Otherwise, $\mathbf{A}_k^d\prec B'$, so $C_1^-\prec B'$ and we can move $C_1^-$ all the way to the right to be part of $R'$. Thus, we can assume that $C_1\succ 0$ and $|C_1|\leq T+2t+1$ (see Figure~\ref{fig:kinduct4}). 

\begin{figure}
    \centering
    \includegraphics[scale=1.0]{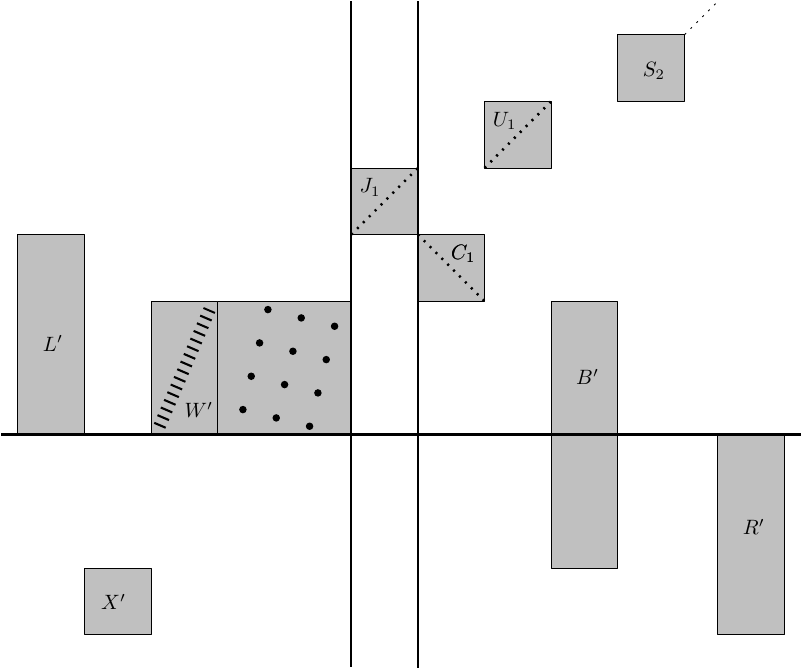}
    \caption{Step 2, part 2.}
    \label{fig:kinduct4}
\end{figure}

We can go from $C_1 U_1$ to $U_1 C_1$. Decompose $U_1=U_1' U_1''$, where $|U_1'|=2t+1$ and $|U_1''|=d$. Then, perform the flip $[-d-t,d+3t+1]$ to go to 
$$L' X' W' M_m^1 \cdots M_2^d  \ol{U_1''}\ \mathbf{\ol{U_1'}}\ \ol{J_1} N_1 C_1 B' S_2\cdots S_m Y'' R',$$
where $N_1=M_1^d M_1^{d-1}\cdots M_1^1$. We can then move $\ol{U_1''}$ to the left to be absorbed by $L'$ (see Figure~\ref{fig:kinduct5}).

\begin{figure}
    \centering
    \includegraphics[scale=1.0]{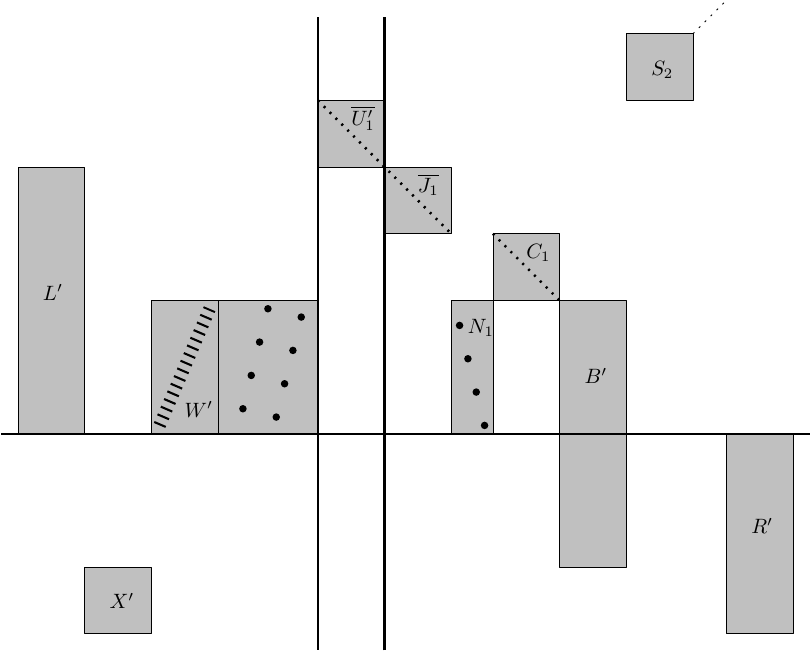}
    \caption{Step 2, part 3.}
    \label{fig:kinduct5}
\end{figure}

Repeating this process, we can go to
$$L' X' W' \mathbf{\ol{U_m'}} C B' Y'' R',$$
where $C=\ol{J_m} N_m C_m \cdots \ol{J_1} N_1 C_1\succ 0$ and $N_i=M_i^d M_i^{d-1}\cdots M_i^1$ (see Figure~\ref{fig:kinduct6}).

\begin{figure}
    \centering
    \includegraphics[scale=1.0]{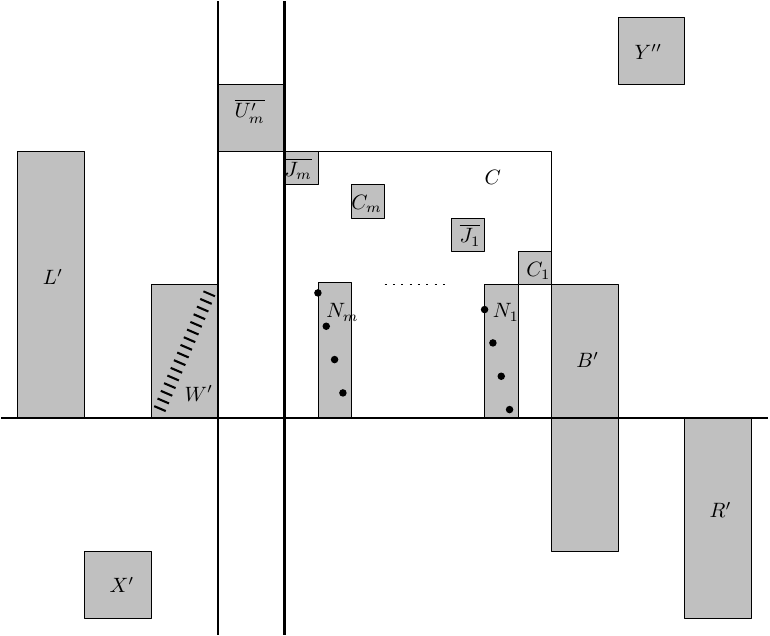}
    \caption{Step 2, part 4.}
    \label{fig:kinduct6}
\end{figure}

Before continuing with the construction, let us analyse the width of the block $C$. Observe that the width of interlaced blocks is at most the sum of the widths of the individual blocks, so 
$$\width(C)\leq \sum_{i=1}^m\width(\ol{J_i} C_i)+\width(N_m\cdots N_1).$$
We have $\width(\ol{J_i} C_i)\leq |J_i|+|C_i|\leq T+4t+2$. Moreover, $N_m\cdots N_1$ can be covered by $m+d-1$ increasing subsequences, so that $\width(N_m\cdots N_1)\leq m+d-1$. Overall, we have
$$\width(C)\leq m(T+4t+2)+m+d-1\leq 2mT+d.$$

\noindent
\textbf{Step 3:} Forming the tail of $W_{k+1}$.\\

\noindent
Move $X'$ and $Y''$ towards the centre. In this step, we will largely focus on the centred subsequence $W' X' \mathbf{\ol{U_m'}} Y''$, only coming back to the full sequence towards the end.

Expand $W'=K_1\cdots K_{md}$, where each $K_i$ is decreasing and $|K_i|=n+1$. Write $K_i=K_i' O_i$, where $|K_i'|=n$ and $|O_i|=1$. Then we can go from $W'$ to
$$K_1'\cdots K_{md}' O_1 \cdots O_{md}.$$
Write $X'=X'' P_p P_{p-1}\cdots P_1$ and $Y''=Q_1\cdots Q_p$, where $|P_i|=|Q_i|=1$, $|X''|=2t+1$ and $p=\floor{\frac{md-T}{T+4t+2}}\geq 1$, by using $d\geq 9T$ (note that we have now fixed the original sizes of $X$ and $Y$). 
Set $W''=K_1'\cdots K_{md}'$. We shall group the $O_i$ as
$$O_1\cdots O_{md}=F G_p H_p G_{p-1} H_{p-1}\cdots G_1 H_1,$$
where $|G_i|=2t+1,|H_i|=T+2t+1$ and $F$ is what remains, so that $|F|\geq T$ from the definition of $p$.

By rearranging the blocks, we can go to 
$$W'' X'' F P_p G_p H_p\cdots P_1 G_1 H_1 \mathbf{\ol{U_m'}} Q_1 \cdots Q_p.$$

By reflecting $P_1$ in the mirrored sense, we can go from $P_1 G_1 H_1 \mathbf{\ol{U_m'}} Q_1$ to $Q_1 \ol{U_m'}\ \ol{H_1}\ \mathbf{\ol{G_1}} P_1$. Then, moving $Q_1 \ol{U_m'}$ to the left, we can go to 
$$Q_1 \ol{U_m'}  W'' X'' F P_p G_p H_p\cdots P_2 G_2 H_2 \ol{H_1}\ \mathbf{\ol{G_1}} P_1 Q_2\cdots Q_p$$
(see Figure~\ref{fig:kinduct7}). 

\begin{figure}
    \centering
    \includegraphics[scale=0.8]{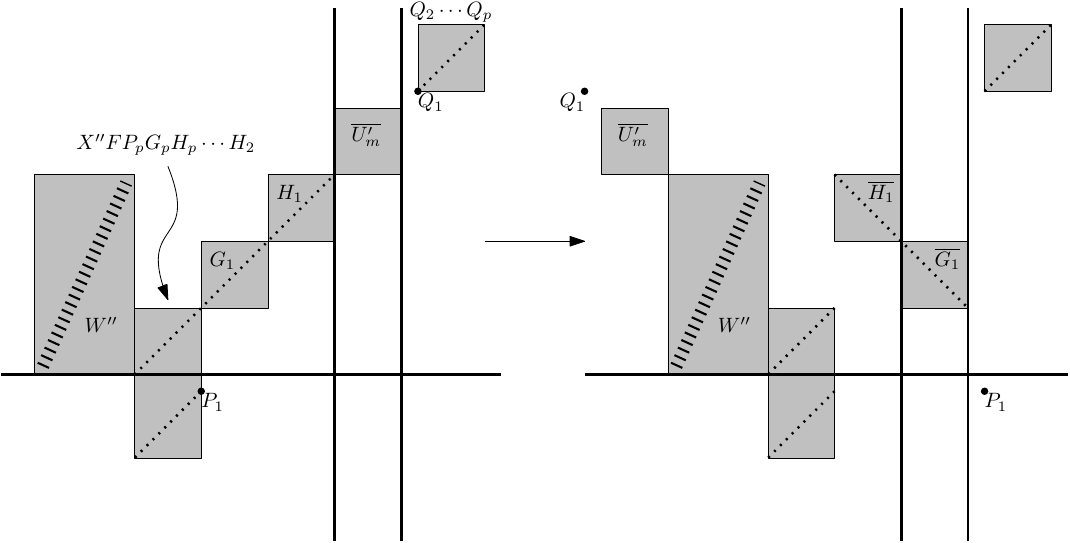}
    \caption{Step 3, part 1.}
    \label{fig:kinduct7}
\end{figure}

Move $\ol{H_1}$ to be between $W''$ and $X''$ and move $P_1$ to the right. Reflecting $P_2$ similarly, we can go from $P_2 G_2 H_2 \mathbf{\ol{G_1}} Q_2$ to $Q_2 \ol{G_1}\ \ol{H_2}\ \mathbf{\ol{G_2}} P_2$
(see Figure~\ref{fig:kinduct8}). 
Rearranging the blocks, we obtain
\begin{align*}
    & Q_1 Q_2 \ol{U_m'}  W'' \ol{H_1}\ \ol{G_1}\ \ol{H_2} X'' F P_p G_p H_p\cdots P_3 G_3 H_3 \mathbf{\ol{G_2}} Q_3\cdots Q_p P_2 P_1.
\end{align*}

\begin{figure}
    \centering
    \includegraphics[scale=0.8]{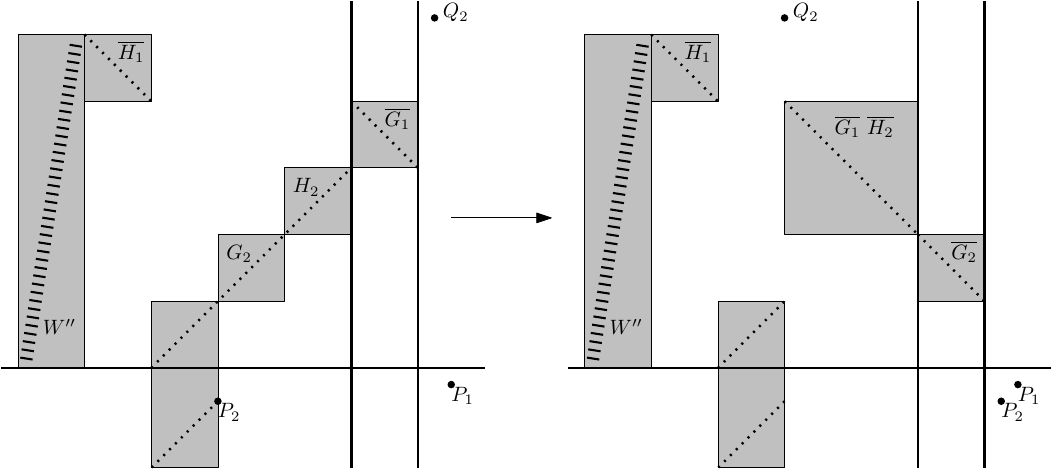}
    \caption{Step 3, part 2.}
    \label{fig:kinduct8}
\end{figure}

Repeating all the way to $P_p G_p H_p$, we obtain
$$Q \ol{U_m'}  W'' \ol{H_1}\ \ol{G_1}\cdots \ol{H_p} X'' F \mathbf{\ol{G_p}} P,$$
where $P=P_p\cdots P_1$ and $Q=Q_1\cdots Q_p$ are increasing. By shifting $X''$ in the mirrored sense, we obtain
$$Q \ol{U_m'}  W'' \ol{H_1}\ \ol{G_1}\cdots \ol{H_p}\ \ol{G_p}\ \ol{F} \mathbf{X''} P$$
(see Figure~\ref{fig:kinduct9}).

\begin{figure}
    \centering
    \includegraphics[scale=0.9]{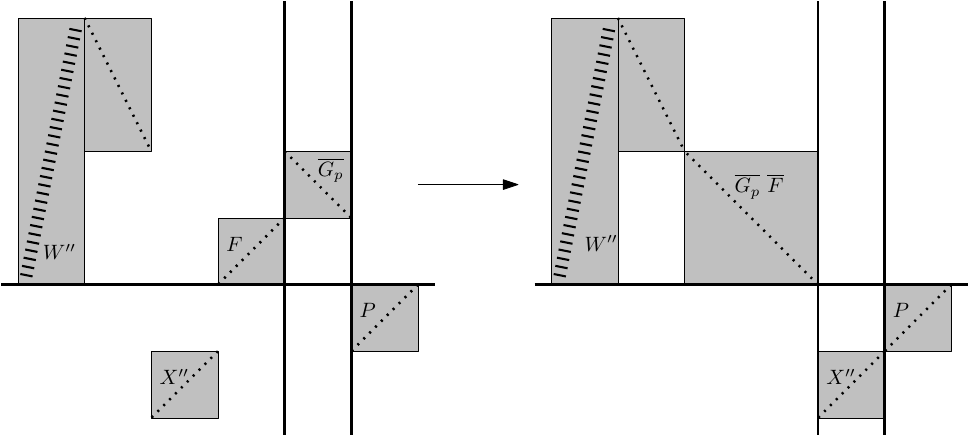}
    \caption{Step 3, part 3.}
    \label{fig:kinduct9}
\end{figure}

Overall, we managed to go from the original sequence $X \mathbf{I} Y$ to
$$L' Q \ol{U_m'}  W'' O_{md}\cdots O_1 \mathbf{X''} P C B' R'.$$
Now set $L_{k+1}=L' Q \ol{U_m'}$, $W_{k+1}=W'' O_{md}\cdots O_1$, $\mathbf{A}_{k+1}=\mathbf{X''}$, $B_{k+1}=P C B'$ and $R_{k+1}=R'$. This concludes the construction, but we still need to check that the required conditions hold. Conditions (1), (2), (3), (4) and (5) are easy to check, so it only remains to show that $B_{k+1}$ satisfies conditions (6), (7) and (8). 

For $l \geq 0$, let $\beta_l' = \frac{d^{l +1}}{3T}$ and $\alpha_l'=2d^lT+d$. In particular, $\beta_k'=\frac{md}{3T}\leq \floor{\frac{md-T}{T+4t+2}}=p$ and $\alpha_k'=2mT+d$, where the inequality holds since $d \ge 9T$. We have $\width(B_{k+1}^+)\leq \width(C)+\width(B'^+)\leq \alpha_k'+d\alpha_k=\alpha_{k+1}$, proving (6). We also have that $|B_{k+1}^-|=|P|+|B'^-|\geq \beta_k'+d\beta_k=\beta_{k+1}$, proving (7). To prove (8), we need the following claim.

\vspace{3mm}
\noindent
{\bf Claim.} {\it For all $l \geq 0$, $\beta_l'/\alpha_l'>\beta_{l+1}/\alpha_{l+1}>\beta_l/\alpha_l$.}

\begin{proof}
We will show by induction on $l$ that $\beta_l'/\alpha_l'>\beta_l/\alpha_l$ and the full result will fall out as a consequence. The base case $l=0$ is trivial, since $\beta_0=0$. To show the result for $l+1$, note that, since $\frac{\beta_{l+1}}{\alpha_{l+1}}=\frac{\beta_l'+d\beta_l}{\alpha_l'+d\alpha_l}$ and $\beta_l'/\alpha_l'>\beta_l/\alpha_l$, 
we have $\beta_l'/\alpha_l'>\beta_{l+1}/\alpha_{l+1}>\beta_l/\alpha_l$. But $\frac{\beta_{l+1}'}{\alpha_{l+1}'}=\frac{d^{l+2}}{3T(2d^{l+1}T+d)}\geq \frac{d^{l+1}}{3T(2d^{l}T+d)}=\frac{\beta_{l}'}{\alpha_{l}'}$, so that $\beta_{l+1}'/\alpha_{l+1}'>\beta_{l+1}/\alpha_{l+1}$, as required. 
\end{proof}

Suppose now that $J$ is an initial segment of $B_{k+1}$. If $J$ is a subblock of $P$, then $J^+=\emptyset$. If $J$ contains $P$ and is a subblock of $P C$, then $|J^-|=p\geq\beta_k'$ and $\width(J^+)\leq \width(C)\leq \alpha_k'$, so, by the claim, $|J^-|\geq (\beta_{k+1}/\alpha_{k+1})\width(J^+)$. Suppose then that $J=P C J'$, where $J'$ is a non-empty initial segment of $B'$. Since $B'$ is $(\beta_k/\alpha_k)$-balanced, we have $|J'^-|\geq (\beta_k/\alpha_k)\width(J'^+)$. But $|J^-|\geq \beta_k'+|J'^-|$ and $\width(J^+)\leq \alpha_k'+\width(J'^+)$, so, since $\width(J'^+)\leq \width(B'^+)\leq d\alpha_k$, we have
$$\frac{|J^-|}{\width(J^+)}\geq \frac{\beta_k'+(\beta_k/\alpha_k)\width(J'^+)}{\alpha_k'+\width(J'^+)}\geq \frac{\beta_k'+d\beta_k}{\alpha_k'+d\alpha_k}=\frac{\beta_{k+1}}{\alpha_{k+1}}.$$
Note that here, in the second inequality, we used that, since $\beta_k'/\alpha_k' > \beta_k/\alpha_k$, the function $(\beta_k'+(\beta_k/\alpha_k)x)/(\alpha_k'+x)$ is decreasing in $x$.
Since $B_{k+1}^-$ is increasing, this implies that $B_{k+1}$ is $(\beta_{k+1}/\alpha_{k+1})$-balanced, proving (8).

Finally, we give bounds on $|X|$ and $|Y|$. Suppose, for given $d,n,k$, that the construction gives $|X|=x_{d,n,k}$ and $|Y|=y_{d,n,k}$. Then $x_{d,n,0}=n+1$ and $y_{d,n,0}=T+4t+3+n$. Moreover, following the construction, we have that, for $k\geq 0$, 
\begin{align*}
    x_{d,n,k+1} &= dx_{d,n+1,k}+p+2t+1\leq dx_{d,n+1,k}+\frac{d^{k+1}}{2T}+2t+1\\
    y_{d,n,k+1} &= dy_{d,n+1,k}+m(T+d+4t+2)+p\leq dy_{d,n+1,k}+2d^{k+1},
\end{align*}
so we get the (loose) bounds $x_{d,n,k},y_{d,n,k}\leq 10d^{2k+1}n$.
\end{proof}

\section{The full construction}

We now proceed to the full construction. Set $T=3^{2t}$. Start with the identity centred sequence $\mathbf{I}:[a,b]\to\ZZ$ for suitable $a<0<b$ to be decided later. 
Perform the flip $[-t,3t+1]$ to obtain 
$$X' X \mathbf{I}' J Y,$$
where $\mathbf{I}':[-t,t]\to\ZZ$, $|J|=2t+1$ and $X',X$ are of suitable lengths to be decided later. $J$ is an important piece that we will have to bring back to the centre at the end. For now, we set it aside by moving it to the right to get
$$X' X \mathbf{I}' Y J$$
(see Figure~\ref{fig:construction1}).

\begin{figure}
    \centering
    \includegraphics[scale=1.0]{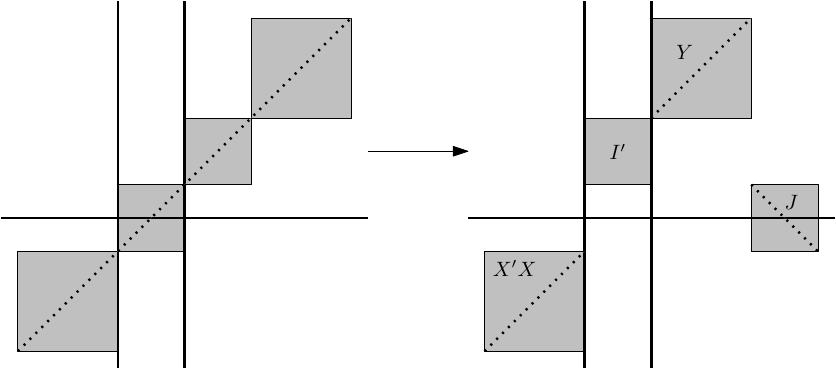}
    \caption{Starting the construction.}
    \label{fig:construction1}
\end{figure}

Applying Lemma~\ref{lem:main} to $X \mathbf{I}' Y$ with $n=1$ and some $d$ (with suitably chosen $a,b$), we can go to
$$X' L_k W_k \mathbf{A}_k B_k R_k J,$$
where $B_k$ is $r=(\beta_k/\alpha_k)$-balanced. 

We now show that for a suitable choice of $d$ and $k$, we can make $r$ arbitrarily large. Indeed, we have $\alpha_1\leq 3dT$,
$$\alpha_{i+1}\leq d\alpha_i+3Td^i$$
for $i\geq 1$ and
$$\beta_{i+1}\geq d\beta_i+\frac{d^{i+1}}{3T},$$
so that $\alpha_k\leq 3T d^k+3kTd^{k-1}$ and $\beta_k\geq \frac{k}{3T}d^k$. Hence, $r\geq \frac{k}{3T(3T+3kT/d)}$. Setting $k=d$, we see that $r\geq \frac{d}{18T^2}$ can be taken arbitrarily large. In practice, we will set $d=100T^3$, so that $r\geq 3T+1$. 

By Lemma~\ref{lem:decomp}, we can go from $B_k$ to $C_1\cdots C_{m-1} C_m'$ where each block $C_i$ and $C_m'$ is increasing and $|C_i^-|,|C_m'^-|\geq \floor{r}\geq 3T$. We can decompose $C_m'$ into $C_m'' Z  C_m'''$, where $C_m''\prec Z\prec 0\prec C_m'''$, $|Z|\geq T$ and $|C_m''|\geq 2T$. We can go from $C_m'' Z C_m'''$ to $C_m'' C_m''' Z$. Now set $C_m=C_m' C_m''$. Thus, we have gone from $B_k$ to $C_1\cdots C_m Z$, where $C_1^-\prec C_2^-\prec \cdots\prec C_m^-\prec Z\prec 0$ and $|C_i^-|\geq 2T$ for each $i$ (see Figure~\ref{fig:construction_C_decomp}).

\begin{figure}
    \centering
    \includegraphics[scale=1.0]{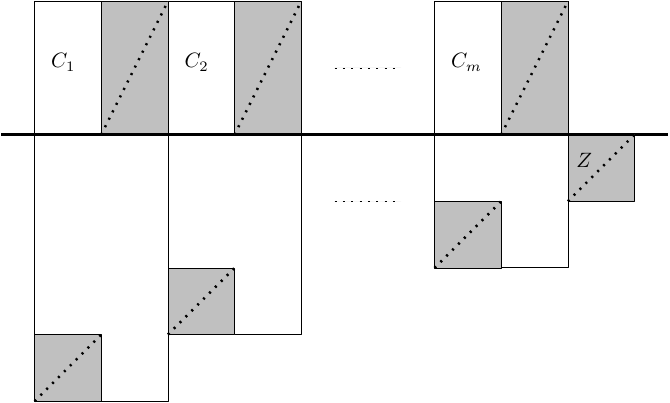}
    \caption{Transforming $B_k$.}
    \label{fig:construction_C_decomp}
\end{figure}

For each $i$, decompose $C_i$ into $D_i E_i F_i$, where $D_i\prec E_i\prec 0\prec F_i$ and $|E_i|=2t+1$, so that $|D_i|\geq T+2t+1$. Decompose $X'$ as $X'=X_m\cdots X_1$ with $|X_i|=|F_i|$, thus fixing the value of $a$. 

Note that $\mathbf{A}_k\prec C_1$. By bringing $X_1$ towards the centre and reflecting $F_1$, we can go from $X_1 \mathbf{A}_k D_1 E_1 F_1$ to 
$$\ol{F_1}\ \mathbf{\ol{E_1}} P_1,$$
for some decreasing $P_1\prec E_1$ (see Figure~\ref{fig:construction_moveC}).

\begin{figure}
    \centering
    \includegraphics[scale=1.0]{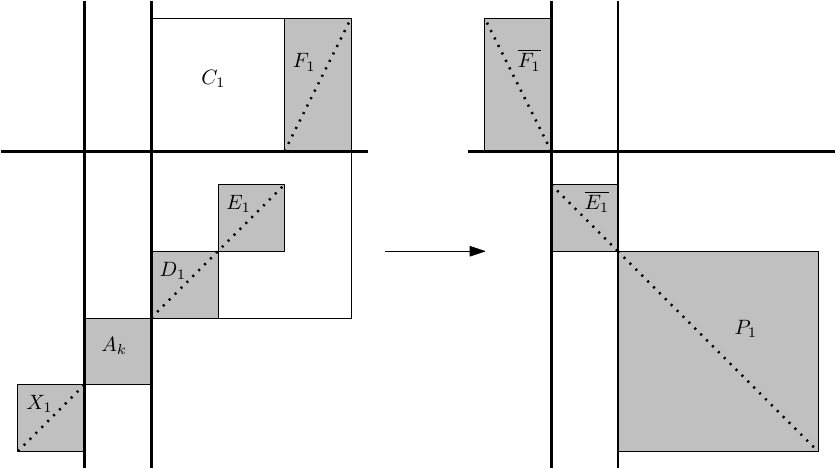}
    \caption{Moving $C_1^+$ across.}
    \label{fig:construction_moveC}
\end{figure}

Note that $P_1$ consists of elements from $X_1$, $A_k$ and $D_1$, so $P_1\prec Z J$. Set $P_1$ aside by moving it all the way to the right, just before $Z$. 
Repeating this process for all the $C_i$, we can go from 
$$X' L_k W_k \mathbf{A}_k B_k R_k J$$
to
$$L_k W_k \ol{F_1}\ \ol{F_2}\cdots \ol{F_m}\ \mathbf{\ol{E_m}} P_m \cdots P_1 Z R_k J.$$
Bringing $Z J$ to the left, we can go to $$L_k W_k \ol{F_1} \cdots \ol{F_m}\ \mathbf{\ol{E_m}} Z J P_m\cdots P_1 R_k$$
(see Figure~\ref{fig:construction_final}).

\begin{figure}
    \centering
    \includegraphics[scale=1.0]{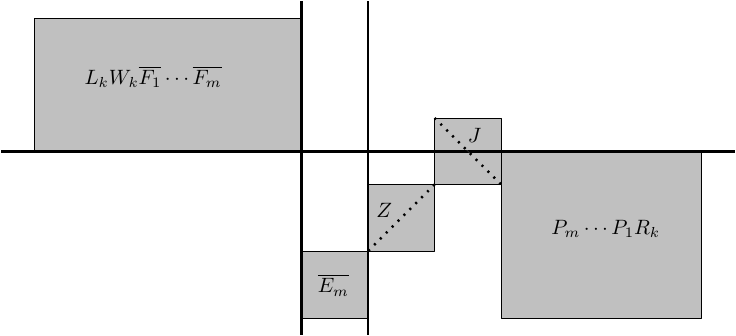}
    \caption{Almost done!}
    \label{fig:construction_final}
\end{figure}

Finally, shift $J$ to go from $\mathbf{\ol{E_m}} Z J$ to $\mathbf{J} Z'$, where $Z'\prec 0$. 
Now sort $L_k W_k \ol{F_1} \cdots \ol{F_m}$ and $Z' P_m\cdots P_1 R_k$ in decreasing order to get the reverse of the identity. Since this must be a centred sequence $[a,b]\to\ZZ$, we have $a=-b$. 
Moreover, we have $b=3t+1+|Y|$ and, by Lemma~\ref{lem:main}, $|Y|\leq 10d^{2k+1}n$ with $k=d=100T^3$ and $n=1$, giving $b= 2^{2^{O(t)}}$, as required.

\section{Concluding remarks}

The main problem left open by our work is whether there are configurations like ours which can be realised by lines. In the literature, such configurations are called {\it stretchable}. While we suspect that our configurations, as given, are not stretchable, we firmly believe, and conjecture below, that there should be stretchable configurations, perhaps even suitable variants of our construction, where the number of points on either side of each line differs by at least $k$ for any given natural number $k$.

\begin{conj}
There exists an unbounded function $f: \mathbb{N} \rightarrow \mathbb{N}$ such that for every natural number $n$ there is a set of $n$ points for which the number of points on either side of each line determined by the set differ by at least $f(n)$. 
\end{conj}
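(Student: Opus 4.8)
The plan is to settle the conjecture by producing, for every $n$, a \emph{straight-line} point set in which every determined line has imbalance tending to infinity with $n$. Unlike Theorem~\ref{thm:main2}, whose construction runs entirely through allowable sequences and hence only yields a pseudoline arrangement of unknown stretchability, the goal here is to build a genuine Euclidean point set directly. Since the conjecture only demands that the imbalance grow at \emph{some} unbounded rate --- even $\log^* n$, or slower, would be enough --- one can afford to be wasteful, replacing the delicate flip/shift/reflect machinery of Sections~2--3 by a cruder bootstrapping scheme that is carried out inside the space of straight-line-realizable configurations.

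Concretely, I would aim for an operation $P \mapsto P'$ that takes a finite planar point set $P$ in which every line through two of its points has imbalance at least $k$ and returns a larger straight-line-realizable set $P'$ with imbalance at least $k+1$; iterating from the known small example with imbalance $2$ --- and padding the intermediate sizes by inserting equally many points on each side of a fixed line, exactly as in the remark following Kupitz's conjecture --- would then give the result. The natural candidate is a \emph{clustered blow-up}: shrink $|P|$ tiny copies of $P$ and place them as clusters near the vertices of a large auxiliary configuration $Q$ chosen so that (a) lines internal to one cluster inherit $P$'s imbalance and additionally pick up a roughly balanced contribution from the other clusters, so the imbalance strictly increases; (b) lines joining two clusters are controlled by the imbalance of $Q$ together with the two clusters the line passes through; and (c) generic placement of the clusters forbids any line meeting three or more clusters. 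One would want all points realized as small perturbations of a single rigid model --- points on a conic, on the moment curve, or on a convex polygon --- so that incidences are either forced or impossible and the side of a line on which a point lies is read off from a cyclic order. The arithmetic needed to keep the imbalance growing while the point count stays under control is a recursion in the spirit of Lemma~\ref{lem:main} and Lemma~\ref{lem:decomp}, but now with the side condition ``everything stays straight-line-realizable'' imposed at every level.

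The main obstacle is exactly that side condition. Stretchability is emphatically not preserved by the obvious combinatorial moves: by Mn\"ev's universality theorem the realization spaces of line arrangements can be arbitrarily complicated, arbitrarily small perturbations of a realizable combinatorial type can violate Pappus- or Desargues-type constraints, and --- as the authors themselves suspect --- the configurations produced by the present paper are probably not stretchable at all. So the real content of any proof will be an explicit geometric realization accompanying every step of the recursion, most plausibly forcing one to work with the rigid algebraic-curve models above rather than with the flexible flip moves. The hard part will be to find an operation that simultaneously (i) strictly increases the minimum imbalance, (ii) keeps $|P'|$ a controlled function of $|P|$ so that the imbalance still grows unboundedly in $n$, and (iii) is manifestly realizable by straight lines at every stage; reconciling the rigidity demanded by (iii) with the freedom demanded by (i) and (ii) is where the difficulty --- and the genuine possibility that the conjecture is false --- resides.
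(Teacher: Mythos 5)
This statement is not a theorem of the paper but an open \emph{conjecture}; the paper proves nothing here, and in fact explicitly remarks that it does not even know how to exhibit point sets for which the imbalance of every determined line is at least $3$. So there is no paper proof to compare against, and your write-up is correspondingly not a proof but a research sketch --- you say as much yourself, and that honesty is appropriate. Your framing is aligned with the paper's concluding remarks: the obstruction is stretchability, and Mn\"ev universality means the combinatorial moves of Sections~2--3 have no reason to survive a straight-line realization.

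That said, the one concrete step you do propose --- the clustered blow-up $P \mapsto P'$ around a host configuration $Q$ --- has a gap that you do not flag. For a line $\ell$ internal to cluster $i$, the contribution of the other clusters is governed by how $\ell$, viewed as a line through the single host vertex $v_i$, splits the remaining vertices of $Q$. Lines through a \emph{single} point of $Q$ are unconstrained by any imbalance hypothesis on $Q$: they can split $Q\setminus\{v_i\}$ perfectly evenly, in which case $\ell$ inherits exactly the imbalance $k$ it already had inside $P$ and gains nothing. Worse, even when the split of the other clusters is unbalanced, its sign can oppose the internal imbalance of $\ell$ within the cluster, producing cancellation and a \emph{smaller} imbalance than $k$. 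So the operation as stated does not strictly increase the minimum imbalance; any real construction would need a mechanism forcing the internal and external contributions to have a consistent sign across \emph{all} lines through two points of a cluster, and across all clusters --- a constraint you have not addressed and which is not obviously satisfiable while preserving straight-line realizability. Parity tricks (odd cluster size, even $|Q|$) force the external contribution to be nonzero but still do not control its sign. This sign-coherence problem is, in miniature, the same difficulty the paper's allowable-sequence machinery is designed to sidestep, and it would need to be solved before the recursion can get off the ground.
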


\noindent
It may also be that we can take $f(n) = c \log \log n$ for some $c > 0$, matching our Theorem~\ref{thm:main1}. However, at present, we do not even know how to find point sets where the number of points on either side of each line determined by the set differ by at least $3$.

Our Theorem~\ref{thm:main1} also implies that another result of Pinchasi is essentially best possible, in that any improvement must somehow take into account the fact that one is dealing with lines rather than pseudolines. If we define $f(k)$ to be the maximum size of a finite point set in the plane, not  contained in a line, with the property that there is no line through at least two of these points with at least $k$ points on either side of this line, then Pinchasi~\cite{P03} showed that $f(k) < 2k + C \log \log k$ for some absolute constant $C$. Once again, this follows from a statement about allowable sequences and therefore holds in the broader context of generalised configurations. Moreover, in this form, the result easily implies Theorem~\ref{thm:Pin}, since if $n = 2k + C \log \log k$, then either every point is on some pseudoline, in which case we are done, or there exists a pseudoline with at least $k$ points on either side and the number of points on each side differ by at most $C\log \log k$. By the same argument, any improved bound for this result would give an analogous improvement to Theorem~\ref{thm:Pin}, but our Theorem~\ref{thm:main1} shows that this is already best possible.

One may also study analogues of Kupitz's question in higher dimensions. In three dimensions, the most natural analogue is whether there is a fixed natural number $k$ such that every finite point set has a plane through at least three of its points where the number of points on either side of this plane differ by at most $k$. Curiously, we do not even know the answer under the far less prescriptive condition that the plane only pass through at least two points. By a simple projection argument, we know that an analogue of Pinchasi's result holds in this context, namely, that if our point set has $n$ points, then there is a plane through at least two points where the number of points on either side of this plane differ by at most $C \log \log n$. However, we do not expect this to be tight.


\begin{thebibliography}{}

\bibitem{Al02}
N. Alon, Research Problems, {\it Discrete Math.} {\bf 257} (2002), 599--624.

\bibitem{BMP05}
P. Brass, W. Moser and J. Pach, {\bf Research problems in discrete geometry}, Springer, New York, 2005.

\bibitem{Erd84}
P. Erd\H{o}s, Some old and new problems in combinatorial geometry, in Convexity and graph theory (Jerusalem, 1981), 129--136, North-Holland Math. Stud., 87, Ann. Discrete Math., 20, North-Holland, Amsterdam, 1984.

\bibitem{FG18}
S. Felsner and J. E. Goodman, Pseudoline arrangements, in Handbook of discrete and computational geometry, 3rd edition, 125--157,
CRC Press Ser. Discrete Math. Appl., CRC, Boca Raton, FL, 2018.

\bibitem{GP80}
J. E. Goodman and R. Pollack, On the combinatorial classification of nondegenerate
configurations in the plane, {\it J. Combin. Theory Ser. A} {\bf 29} (1980), 220--235.

\bibitem{GP84}
J. E. Goodman and R. Pollack, Semispaces of configurations, cell complexes of arrangements, {\it J. Combin. Theory Ser. A} {\bf 37} (1984), 257--293.

\bibitem{Green}
B. Green, 100 open problems, unpublished manuscript.

\bibitem{Kalai}
G. Kalai, A discrepancy problem for planar configurations, blog post available at {https://gilkalai.wordpress.com/2010/02/03/a-discrepency-problem-for-planar-configurations/}.

\bibitem{K79}
Y. S. Kupitz, {\bf Extremal problems in combinatorial geometry}, Lecture Notes Series, 53, Aarhus Universitet, Matematisk Institut, Aarhus, 1979.

\bibitem{P18}
J. Pach, Finite point configurations, in Handbook of discrete and computational geometry, 3rd edition, 3--25,
CRC Press Ser. Discrete Math. Appl., CRC, Boca Raton, FL, 2018.

\bibitem{P03}
R. Pinchasi, Lines with many points on both sides,
{\it Discrete Comput. Geom.} {\bf 30} (2003), 415--435.


\end{thebibliography}
\end{document}